\newtheorem{theorem}{Theorem}[]
\newtheorem{remark}{Remark}[]
\newtheorem{definition}{Definition}[]
\newtheorem{proposition}[theorem]{Proposition}
\newtheorem{lemma}[theorem]{Lemma}
\newcommand{\comp}{\raisebox{0.1ex}{\scriptsize $\circ$}}
\newcommand{\eqdist}{ \overset{(d)}{=} }
\def\osc{\operatorname{osc}}
\newcommand{\keywords}[1]{ \noindent {\footnotesize
             {\small \em Keywords and phrases.} {\sc #1} } }
\newcommand{\ams}[2]{  \noindent {\footnotesize
             {\small \em AMS {\rm 2000} subject classifications.
             {\rm Primary {\sc #1}; secondary {\sc #2}} } } }
\title{\Huge{Iterating Brownian motions, \emph{ad libitum}}}
\author{Nicolas Curien and Takis Konstantopoulos}
\date{December 2011}
\begin{document}
\maketitle

\abstract{Let $B_1,B_2, ...$ be independent one-dimensional Brownian motions defined over the whole real line such that $B_i(0)=0$ for every $i \geq 1$. We consider the $n$th iterated Brownian motion $W_n(t)= B_n(B_{n-1}(...(B_2(B_1(t)))...))$. Although the sequences of processes $(W_{n})_{n \geq 1}$ do not converge in a functional sense, we prove that the finite-dimensional marginals converge. As a consequence, we deduce that the random occupation measures of $W_n$ converge towards a random probability measure $\mu_\infty$. We then prove that $\mu_\infty$ almost surely has a continuous density which must be thought of as the local time process of the infinite iteration of independent Brownian motions.}

\vspace*{2mm}
\keywords{Brownian motion, Iterated Brownian motion, Harris chain,
random measure, exchangeability, weak convergence, local time}

\vspace*{2mm}
\ams{60J65,60J05}{60G57,60E99}

\section{Introduction}  \label{introduction}
Let $B_+=(B_+(t), t \ge 0)$ and $B_-=(B_-(t), t \ge 0)$ be independent standard one-dimensional Brownian motions starting from $0$.
The process 
$B(t) := B_+(t)$ if $t \geq 0$ and $B(t):=B_-(-t)$ if $t \le 0$ is called a two-sided Brownian motion. In this paper we  study the iterations of independent (two-sided) Brownian motions. Formally,  let $B_1,B_2, ...$ be a sequence of independent two-sided Brownian motions and,
for every $n \geq 1$ and every $t \in \mathbb{R}$,
set   
\begin{eqnarray*}W_n(t)&:=& B_n\Bigg(B_{n-1}\bigg(...\Big(B_2\big(B_1(t)\big)\Big)...\bigg)\Bigg).  \end{eqnarray*}  Burdzy \cite{Bur93} studied sample path properties of the random function 
$ t \mapsto W_2(t)$ 
and coined the terminology of (second) ``iterated Brownian motion'' for this object.
A motivation for this study is that iterated Brownian motion can be used to
construct a solution to the 4-th order
PDE $\partial u/\partial t = \tfrac{1}{8} \partial^4u /\partial x^4$; see \cite{Fun79}. This model has triggered a lot of work, see \cite{Ber96b,Bur93,Bur95,Bur95,ES99,KL99} and the references therein. 

\begin{figure}[!h]
 \begin{center}
 \includegraphics[width=4cm]{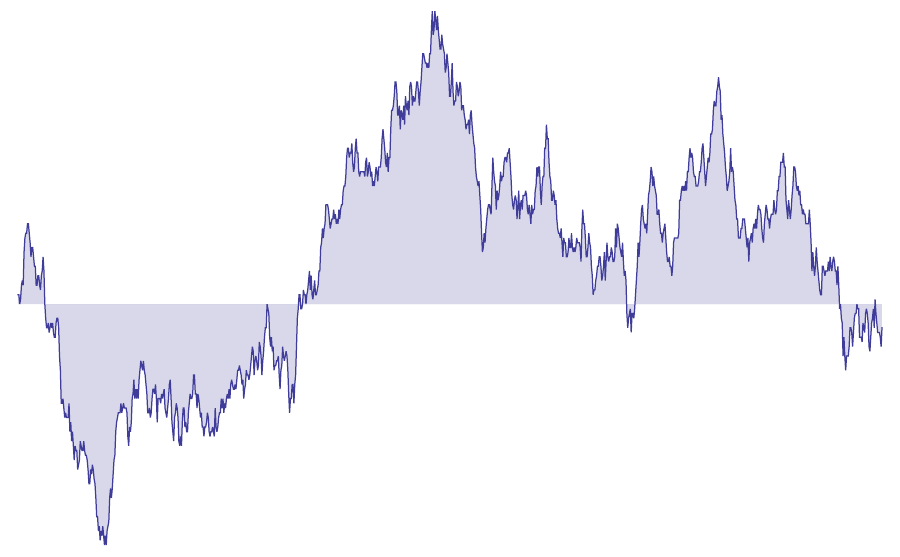}
  \includegraphics[width=4cm]{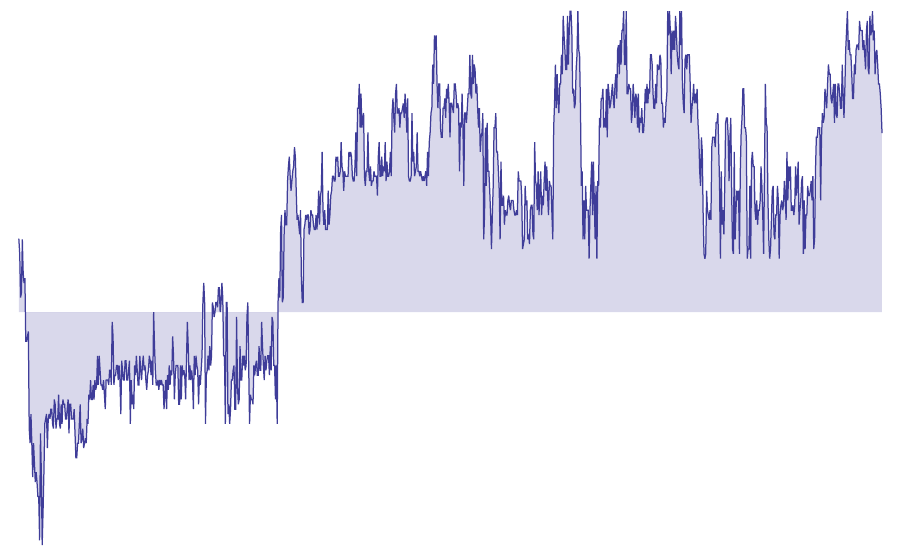}
   \includegraphics[width=4cm]{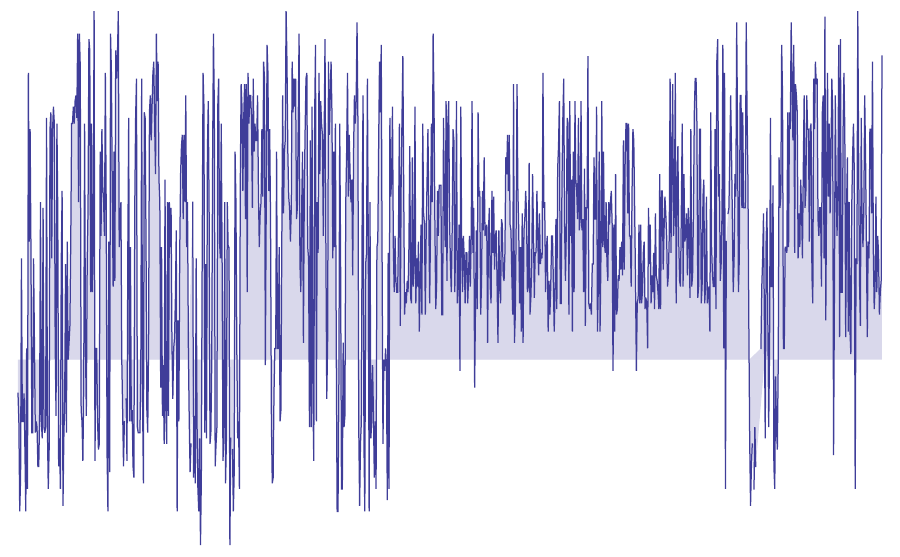}
 \caption{ \label{wild} First, second and third iterations of Brownian motions.
}
 \end{center}
 \end{figure}

Another motivation is that the process $W_n$
 is not
a semimartingale (unless $n=1$). Indeed, for $n=2$, a simple calculation shows
that the quadratic variation of $W_2=B_2 \comp B_1$ does not
exist, but its quartic variation does. Similarly the $2^{n}$--variation of $W_n$ is finite for $n \geq 1$. Hence, as $n$ increases, the process $W_n$ becomes wilder and wilder. 
Also, $W_n$ is self-similar
with index $2^{-n}$, i.e.,
\[
(W_n(\alpha t), t \in \mathbb R) \eqdist (\alpha^{2^{-n}}
W_n(t), t \in \mathbb R),
\]
for all $\alpha >0$.  See Fig.\,\ref{wild} for a comparison of
sample paths of $W_n$, $n=1,2,3$. 
All these reasons make one suspect that  convergence of the laws of the $W_n$'s 
in a ``nice'' function space 
(e.g., the space of continuous functions) is impossible
(see Remark \ref{nonconv}).

However, following the principle of Berman saying that wild functions 
must have smooth local times (see the nice survey \cite{GH80}), 
we prove that the occupation measures $\mu_n$ of $W_n$ over $[0,1]$ 
converge in distribution towards a random  measure $\mu_\infty$ 
which can be interpreted as the occupation measure of the infinite 
iteration ``$W_\infty$". 
More precisely, let $\mu_n$ be the random probability measure defined by  
\begin{equation}	\label{mun}
\int_{ \mathbb{R}} \mathrm{d}\mu_n\ f = \int_0^1 \mathrm{d}t\, 
f\big(W_n(t)\big),  
\end{equation}
for every Borel $f : \mathbb{R} \to \mathbb{R}_+$. 
Our main result is a limit theorem for the sequence 
$(\mu_n)_{n \geq 1}$.
Restricting the integration to the unit interval 
is convenient and poses no loss of generality due 
to the self-similarity property of $W_n$.

Let $ \mathcal{M}$ be the set of all positive Radon measures on $\mathbb{R}$.
Although we focus on the real line, the interested reader should consult
\cite{Kal76} for the general theory of random measures.
Endow $\mathcal{M}$ with the topology 
$\mathcal{ T}$ of vague convergence, that is, the weakest topology 
which makes the mappings 
\[
\mu \in   \mathcal{ M} \mapsto \mu f:= \int_{\mathbb R} \mathrm{d}\mu f ,
\quad  f \in C_K,
\]
continuous. (Here, $C_K$ is the set of continuous functions with compact support.)
A \emph{random measure} is a random element of the space
$( \mathcal{M}, \mathcal{T})$, viewed as a measurable space with
$\sigma$-algebra generated by the sets in $\mathcal{T}$. 
A sequence $\lambda_{1}, \lambda_{2}, \ldots$ of random measures 
converges in distribution towards a random measure $\lambda$ 
if for any bounded continuous mapping $F : ( \mathcal{M}, \mathcal{T}) 
\to \mathbb{R}$ we have $E[F(\lambda_{i})] \to E[F(\lambda)]$ as $i \to \infty$.
We write  \begin{eqnarray*} \lambda_n &\xrightarrow[]{(d)}& \lambda,  \end{eqnarray*} to denote this notion.
Convergence of $\lambda_n$ to $\lambda$ in distribution
is equivalent to:
$\lambda_{n} f \xrightarrow[]{(d)} \lambda f$,
for any continuous $f \in C_K$ (see Theorem 4.2 in \cite{Kal76}). The
latter convergence is convergence in distribution of real-valued 
random variables.

\begin{theorem}  \label{main} 
There exists a random measure
$\mu_\infty$ such that $\mu_n \xrightarrow[]{(d)} \mu_\infty$. 
Moreover $\mu_\infty(\mathbb R)=1$, a.s.
The random probability measure $\mu_\infty$ almost surely admits a density 
$(L_a)_{a \in \mathbb{R}}$ with respect to the Lebesgue measure such 
that $a \mapsto L_a$ has compact support and is H\"older continuous 
with exponent $1/2- \varepsilon$ for every $ \varepsilon>0$. 
\end{theorem}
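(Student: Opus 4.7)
The plan is to split the theorem into three parts---(a) convergence of the finite-dimensional marginals of $W_n$, (b) convergence $\mu_n \to \mu_\infty$ with $\mu_\infty(\mathbb R)=1$, and (c) existence and H\"older regularity of the density---and tackle them in that order. For (a), fix $t_1,\ldots,t_k \in \mathbb R$ and observe that the $\mathbb R^k$-valued sequence $\mathbf V_n := (W_n(t_1),\ldots,W_n(t_k))$ is a Markov chain in $n$: given $\mathbf V_n$, the state $\mathbf V_{n+1}$ is obtained by evaluating the fresh Brownian motion $B_{n+1}$ at the $k$ (correlated) locations $W_n(t_i)$. For a single coordinate, the scaling identity
\[
\log |W_{n+1}(t)| \;=\; \tfrac{1}{2}\log |W_n(t)| \;+\; \log |G_{n+1}|,\qquad G_{n+1}\sim\mathcal N(0,1)\text{ independent of } W_n(t),
\]
is a linear contractive autoregression with i.i.d.\ innovations and converges exponentially fast to the stationary law $\sum_{j\ge 0} 2^{-j}\log|G_j|$. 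This supplies a Lyapunov function bounding $|\mathbf V_n|$ away from infinity; combined with the fact that the Gaussian transition density of $B_{n+1}$ makes a neighborhood of the origin a recurrent small set, I would verify that $(\mathbf V_n)$ is a (positive, aperiodic) Harris chain, yielding a unique invariant law on $\mathbb R^k$ and convergence in total variation towards it.

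For (b), since $\mu_n(f) = \int_0^1 f(W_n(t))\,\mathrm dt$, for any $f\in C_K$ a Riemann-sum approximation $\mu_n(f) \approx k^{-1}\sum_{i=1}^k f(W_n(i/k))$ converges in law by (a); the approximation error is controlled in $L^1$ by a time-averaged modulus-of-continuity estimate for $W_n$, which does not require the pathwise uniform continuity that fails as $n\to\infty$. Tightness of $(\mu_n)$ in $(\mathcal M,\mathcal T)$ reduces to tightness of the support, and this follows from the a.s.\ recursion $M_n \le S_n\sqrt{M_{n-1}}$, where $M_n := \sup_{t\in[0,1]}|W_n(t)|$ and $S_n := \sup_{|y|\le 1}|B_n(y)|$ is i.i.d.\ in $n$---yet another contraction after taking logs. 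This identifies $\mu_\infty$, and the mass conservation $\mu_\infty(\mathbb R)=1$ a.s.\ follows because tightness prevents escape of mass to infinity.

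For (c), I would establish uniform-in-$n$ moment estimates of the form
\[
\mathbb E\bigl[\mu_n([a,a+h])^{2p}\bigr] \;\le\; C_p\, h^{p(1-\varepsilon)} \qquad\text{for every } p\ge 1,
\]
pass to $\mu_\infty$ using (b), and conclude by a Kolmogorov / Garsia--Rodemich--Rumsey argument applied to $a \mapsto F_\infty(a) := \mu_\infty((-\infty,a])$: enough H\"older regularity of $F_\infty$ forces the existence of a H\"older-$(1/2-\varepsilon)$ density, and its support is compact by the support-tightness from (b). Expanding the moment by Fubini reduces the task to bounding the joint density of $(W_n(s_1),\ldots,W_n(s_{2p}))$ at $2p$ nearby points; conditioning on $(W_{n-1}(s_i))_i$ and using the explicit Gaussian transition kernel of $B_n$ evaluated at those $2p$ locations sets up an induction on $n$. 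The main obstacle, I expect, is making this induction \emph{uniform in $n$}: a naive recursion accumulates constants at each stage, so one must exploit the contracting/stationary structure behind part (a) to keep the bounds $n$-independent.
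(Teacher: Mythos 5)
Your overall strategy for the marginals (view $(W_n(t_1),\ldots,W_n(t_k))$ as a Markov chain and prove positive Harris recurrence) is exactly the paper's, but the two concrete ingredients you propose for it do not work. A neighborhood of the origin in $\mathbb R^k$ is \emph{not} a small set for this chain: if two coordinates $y_i,y_j$ are close then $B(y_i)-B(y_j)\sim\mathcal N(0,|y_i-y_j|)$ concentrates at $0$, so the law of $(B(y_1),\ldots,B(y_k))$ degenerates towards a measure carried by the hyperplane $\{u_i=u_j\}$ (similarly when a coordinate is near $0$), and no uniform minorization by a fixed measure can hold on a set containing such configurations. Correspondingly, a Lyapunov function must penalize proximity of coordinates to each other and to $0$, not just bound $\max_i|W_n(t_i)|$; the paper uses $V(\mathbf x)=\max_i|x_i|+\sum_{0\le i<j\le p}|x_i-x_j|^{-1/2}$ (with $x_0:=0$) and takes as petite set the $M$-sparse tuples (moduli in $[M^{-1},M]$, pairwise gaps $\ge M^{-1}$). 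Your scalar autoregression $\log|W_{n+1}(t)|=\tfrac12\log|W_n(t)|+\log|G_{n+1}|$ controls only the first term and gives no handle on separations, so positive Harris recurrence does not follow from what you wrote. In step (b) the Riemann-sum error cannot be controlled by any modulus-of-continuity estimate: by the very oscillation recursion you invoke, $\osc(W_n;[t,t+1/k])$ converges in distribution, as $n\to\infty$, to the same nondegenerate limit $\prod_i D_i^{2^{-i}}$ for every fixed $k$ (the prefactor $k^{-2^{-n}}\to 1$), so the per-interval error stays of order one however fine the grid. The paper circumvents this by sampling $p$ \emph{uniform random} times: conditionally on $\mu_n$ the values $f(W_n(U_i))$ are i.i.d.\ with mean $\mu_n f$, Chebyshev gives an error bound $\|f\|_\infty^2/(\varepsilon^2 p)$ uniform in $n$, and convergence of the fidis of $(W_n(U_1),\ldots,W_n(U_p))$ then yields convergence of $\mu_n$ (Lemma \ref{convergence}); your tightness-of-support argument, on the other hand, is essentially the paper's oscillation lemma and is fine.

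Part (c) has a logical gap independent of the uniformity-in-$n$ issue you flag: a bound $E[\mu_n([a,a+h])^{2p}]\le C_p h^{p(1-\varepsilon)}$, even if proved and passed to the limit, only gives H\"older continuity of $F_\infty(a)=\mu_\infty((-\infty,a])$, and H\"older continuity of a distribution function does not imply existence of a density (Cantor-type measures), let alone a H\"older-$(1/2-\varepsilon)$ one. A Kolmogorov/GRR route to a H\"older-$\gamma$ density would require moments of second-order increments, e.g.\ $\mu([a,a+h])-\mu([a-h,a])$, of order $h^{2p(1+\gamma)}$, or direct moment bounds on increments of the density, which your scheme does not produce. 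The paper gets the density and the exponent from two ingredients absent in your plan: (i) stationarity of the increments in the limit ($W_n(t)-W_n(s)\eqdist W_n(t-s)$) plus the explicit one-dimensional signed-exponential limit give the exact formula $E[|\Phi(\xi)|^2]=4/(4+\xi^2)$ for the Fourier transform of $\mu_\infty$, hence an a.s.\ density in $\mathbb L^2$ (indeed every $\mathbb L^q$) by Plancherel/Sobolev; (ii) the invariance \eqref{stable} shows that pushing $\mu_\infty$ through one more independent Brownian motion does not change its law, so H\"older-$(1/2-\varepsilon)$ continuity follows from Pitt's theorem on local times relative to a measure with $\mathbb L^q$ density. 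Without substitutes for (i)--(ii), the stated regularity is out of reach of the proposed moment scheme.
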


We can think of $\mu_\infty$ as the occupation measure of the infinite 
iteration of i.i.d.\ Brownian motions. Thus
the random function $(L_a)_{a \in \mathbb{R}}$ must be thought of as 
the local time of this infinite iteration. 
The convergence of the last theorem will be obtained by proving 
convergence of finite dimensional marginals of the iterated 
Brownian motions, namely:
\begin{theorem} \label{thmfidi}
For any integer $p \geq 1$, there exists a random vector 
$(X_i)_{1 \leq i \leq p}$ such that for any pairwise distinct nonzero
real numbers $x_1, x_2, \ldots , x_p$ 
we have the following convergence in distribution 
\begin{eqnarray} \label{fidi} 
\big(W_n(x_i)\big)_{1 \leq i \leq p} & \xrightarrow[n\to\infty]{(d)} & (X_i)_{1 \leq i \leq p}.  
\end{eqnarray}
The random variables $X_1, \ldots, X_p$ and the differences
$X_i-X_j$, $i\neq j$, have all identical distribution which is
that of a ``signed exponential'' with parameter $2$, i.e.\ 
a distribution with density $e^{-2|x|}$, $x \in \mathbb R$.
\end{theorem}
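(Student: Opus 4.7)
My strategy is to identify $V_n := (W_n(x_1), \ldots, W_n(x_p))$ as a Markov chain on $\mathbb R^p$ and prove it is positive Harris recurrent with a unique invariant law independent of the starting state $(x_1,\ldots,x_p)$. Conditionally on $V_{n-1} = (y_1, \ldots, y_p)$, the vector $V_n = (B_n(y_1), \ldots, B_n(y_p))$ is a centered Gaussian with covariance $\Sigma_{ij}(y) = (|y_i| \wedge |y_j|)\mathbf{1}_{y_i y_j > 0}$ off the diagonal and $\Sigma_{ii}(y) = |y_i|$, yielding an explicit Gaussian transition kernel. A useful side remark is that each pairwise difference $D_n^{ij} := W_n(x_i) - W_n(x_j)$ is itself a chain of the same type, since $\mathrm{Var}(B(y_i) - B(y_j)) = |y_i| + |y_j| - 2(|y_i| \wedge |y_j|)\mathbf{1}_{y_i y_j > 0} = |y_i - y_j|$ in every sign configuration, so $D_n^{ij} \mid V_{n-1} \sim N(0, |D_{n-1}^{ij}|)$.

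The engine is the one-dimensional case $p=1$. The chain there reads $V_n \eqdist G_n \sqrt{|V_{n-1}|}$ with $G_n$ standard Gaussian, independent of the past; the sign of $V_n$ is a fair Bernoulli independent of everything, and $\log|V_n| = \tfrac12 \log|V_{n-1}| + \log|G_n|$ is a strictly contracting affine recursion, so $V_n$ converges in distribution to some limit $X$ independently of the initial value. To identify $X$ as the signed exponential of parameter $2$ it suffices to check invariance of the density $e^{-2|x|}$: setting $Y = |V|$, this reduces to the identity $Y \cdot G^2 \eqdist Y^2$ for $Y \sim \mathrm{Exp}(2)$ and $G \sim N(0,1)$ independent, which follows from the Gamma duplication formula $\Gamma(2s+1) = 2^{2s} \pi^{-1/2} \Gamma(s+1)\Gamma(s+\tfrac12)$ by matching Mellin transforms of $Y^2$ and $YG^2$. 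Combined with the observation about differences, this already identifies the marginal law of each $X_i$ and each $X_i - X_j$.

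To lift this to joint convergence of $V_n$ when $p > 1$, I would invoke Harris-chain theory (as the paper's keywords foreshadow). On the open region where all coordinates are nonzero and pairwise distinct, the Gaussian transition density is continuous and strictly positive on compact subsets, which yields $\phi$-irreducibility, small sets and aperiodicity; the sublinear moment bound $E|B(y)| = \sqrt{2|y|/\pi}$ then supplies a Foster--Lyapunov drift (for instance with Lyapunov function $V(y) = 1 + \|y\|_1$). Standard Meyn--Tweedie criteria then deliver convergence in total variation to a unique invariant probability $\pi$ from any non-degenerate initial distribution; in particular $\pi$ does not depend on $(x_1,\ldots,x_p)$, matching the statement. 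The main obstacle I foresee is the careful treatment of the degeneracy locus $\{y : \text{some } y_i=0 \text{ or } y_i = \pm y_j\}$, where $\Sigma(y)$ becomes singular and the chain has absorbing behaviour at the origin: one must verify that this exceptional set is exited after one step with probability one (which is immediate from the Gaussian kernel for a non-degenerate starting state) so that the small-set/minorisation analysis is carried out cleanly on the regular open stratum.
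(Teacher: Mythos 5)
Your overall route is the same as the paper's: view $(W_n(x_1),\ldots,W_n(x_p))$ as a Markov chain on the set of pairwise distinct nonzero configurations, settle $p=1$ through the recursion $|V_n|\eqdist\sqrt{|V_{n-1}|}\,|G_n|$ and the signed-exponential fixed point, identify the laws of the differences by the variance identity $\mathrm{Var}(B(y_i)-B(y_j))=|y_i-y_j|$ (this is exactly the paper's stationarity-of-increments observation), and conclude joint convergence by Meyn--Tweedie theory. The one-dimensional analysis and the marginal identifications are correct; your Mellin/duplication check of invariance is a legitimate substitute for the paper's direct verification of the fixed-point equation.

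The gap is in the Harris step for $p\ge 2$: your drift function and your small sets do not fit together. With $V(y)=1+\|y\|_1$, the drift inequality only forces returns to a bounded set $C=\{\|y\|_1\le c\}$, and Foster--Lyapunov theorems require $C$ to be petite. Bounded subsets of $\mathcal{R}^p$ contain configurations with $|y_i-y_j|$ or $|y_i|$ arbitrarily small, and there the kernel degenerates: conditionally on $y$, $B(y_i)-B(y_j)\sim N(0,|y_i-y_j|)$, so as $|y_i-y_j|\to0$ the one-step law collapses onto the hyperplane $\{z_i=z_j\}$. Any minorization $\mathsf P(y,\cdot)\ge\varepsilon\nu$ holding uniformly on $C$ would force $\nu$ to be carried by that hyperplane (every set at positive distance from it gets vanishing mass along such a sequence), which is impossible since $\mathsf P(y,\cdot)$ charges the hyperplane with mass zero for any admissible $y$. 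A multi-step (sampled-kernel) minorization is not automatic either: a separation $\delta$ becomes only of order $\sqrt\delta$ after one step, so order $\log_2\log(1/\delta)$ iterations are needed to restore order-one separation, and you give no uniform bound along $\delta\to0$. Your remark about the degeneracy locus addresses the wrong issue: the chain indeed never hits the measure-zero set, but the failure of uniformity occurs in its neighborhoods, which every bounded sublevel set of $1+\|y\|_1$ meets. This is precisely why the paper uses $V(\mathbf x)=\max_i|x_i|+\sum_{0\le i<j\le p}|x_i-x_j|^{-1/2}$ with $x_0:=0$: the singular terms make the sublevel sets $M$-sparse (coordinates bounded, separated from each other and from $0$), on which a clean one-step minorization with a uniform constant holds, while the drift bound $E_{\mathbf x}[V(\mathcal W_1)]\le C\sqrt{V(\mathbf x)}$ still follows from $E\big[|B(x_i)-B(x_j)|^{-1/2}\big]=|x_i-x_j|^{-1/4}E[|\mathcal N|^{-1/2}]$. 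To close your argument you must either modify the Lyapunov function in this way or supply a separate proof that your bounded set $C$ is petite.
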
  
The case $p=1$
has already been noticed in the Physics literature \cite{Tur04} 
in the context of infinite iteration of i.i.d.\ random walks. 
We are unfortunately unable to give an explicit formula for
the distribution of $(X_1, \ldots, X_p)$
when $p \geq 2$, see Section \ref{comments} for a discussion 
and simulations of this intriguing probability distribution.
It is quite interesting to observe that the marginals and differences
have all the same distribution but are, of course, dependent.
In a certain sense, the infinite iteration is both self-similar at all scales
and long-range dependent.

The paper is organized as follows. The first section is devoted 
to the proof of Theorem \ref{thmfidi} which is the cornerstone 
in the proof of Theorem \ref{main}. We then turn to the study of 
occupation measures in Section 2. 
The last section presents some open questions and comments. \medskip 

\noindent \textbf{Acknowledgments:} We are deeply indebted to Yuval Peres 
for insightful discussions.

\section{Finite dimensional marginals}
In this section, we prove Theorem \ref{thmfidi}. 
The convergence of one-dimensional marginal is a special case because 
we  can explicitly give its limiting distribution, 
which will be of great use throughout this paper. 
In the general case, the convergence of finite dimensional marginals comes from ergodic property of random iterations of independent Brownian motions.

\subsection{One-dimensional marginals}
Let $\mathbb{R}^* := \mathbb{R} \setminus \{0\}$. For $\lambda >0$, we denote by  
$\pm \mathcal{E}(\lambda)$ a signed exponential distribution
with parameter $\lambda$, i.e., one which has density proportional to
$e^{-\lambda |x|}$, $x \in \mathbb R$.
\begin{proposition} \label{onedim} For any $t \in \mathbb{R}^*$ we have 
the following convergence in distribution 
\begin{eqnarray} \label{fidi1} 
W_n(t) & \xrightarrow[n\to\infty]{(d)} & \pm \mathcal{E}(2),  
\end{eqnarray}
\end{proposition}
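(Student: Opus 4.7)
The plan is to exploit the evident Markov structure of $Y_n := W_n(t)$ for a fixed $t \in \mathbb{R}^*$. Since $B_n$ is independent of $W_{n-1}$, one has $Y_n = B_n(Y_{n-1})$, so conditionally on $Y_{n-1}=y$ the variable $Y_n$ is $\mathcal{N}(0,|y|)$-distributed; equivalently $Y_n = \sqrt{|Y_{n-1}|}\,G_n$ with $(G_k)_{k\ge 1}$ i.i.d.\ standard normals independent of $Y_0=t$. Because $\operatorname{sign}(Y_n) = \operatorname{sign}(G_n)$ is a fair $\pm 1$ sign independent of $G_1,\ldots,G_{n-1}$ and of $|G_n|$, the signs form an i.i.d.\ fair sequence independent of the entire modulus process $(|Y_k|)_{k\ge 1}$. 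It therefore suffices to identify the limiting law of $|Y_n|$ and then multiply by an independent fair sign.

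To do this, I would take logarithms. The process $Z_n := \log|Y_n|$ satisfies the autoregression $Z_n = \tfrac12 Z_{n-1} + \xi_n$ with $\xi_n := \log|G_n|$ i.i.d.\ and integrable. Unrolling this recursion and using exchangeability of $(\xi_1,\ldots,\xi_n)$, one obtains
\[
Z_n \;\eqdist\; 2^{-n}\log|t| + \sum_{j=0}^{n-1} 2^{-j}\,\xi_{j+1} \;\xrightarrow[n\to\infty]{\text{a.s.}}\; Z_\infty := \sum_{j=0}^\infty 2^{-j}\,\xi_{j+1},
\]
the series converging a.s.\ thanks to $E|\xi_1|<\infty$. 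Consequently $Y_n \xrightarrow[n\to\infty]{(d)} \varepsilon\, e^{Z_\infty}$ with $\varepsilon$ a fair sign independent of $Z_\infty$; crucially, the limit law does not depend on $t$.

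It remains to identify this limit with $\pm\mathcal{E}(2)$. Since the AR(1) on $Z_n$ is a strict contraction with factor $1/2$, it has a unique invariant law, hence so does the chain $(Y_n)$; it therefore suffices to check that the density $f(y)=e^{-2|y|}$ is invariant under the transition $Y \mapsto Y'$ with $Y' \mid Y \sim \mathcal{N}(0,|Y|)$. By symmetry in $y \mapsto -y$ this reduces to verifying the identity
\[
\int_0^\infty \frac{1}{\sqrt{2\pi y}}\, e^{-z^2/(2y) - 2y}\, dy \;=\; \tfrac12\, e^{-2|z|},
\]
which is the principal computation of the proof. It can be evaluated via the substitution $u = z^2/(2y)$ combined with the Laplace transform identity
\[
\int_0^\infty \frac{a}{\sqrt{2\pi u^3}}\, e^{-a^2/(2u) - \lambda u}\, du \;=\; e^{-a\sqrt{2\lambda}},
\]
i.e.\ the Laplace transform of the first hitting time of $a>0$ by standard Brownian motion. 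This Gaussian/exponential integral is the only genuine obstacle; the remainder of the argument is an elementary contraction.
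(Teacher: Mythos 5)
Your proposal is correct and is essentially the paper's own argument in logarithmic coordinates: the AR(1) recursion $Z_n=\tfrac12 Z_{n-1}+\xi_n$ with its distributionally reversed, a.s.\ convergent series is exactly the paper's representation $W_n(t)\eqdist \pm t^{2^{-n}}\prod_{i=0}^{n-1}|\mathcal N_i|^{2^{-i}}$ together with its a.s.\ convergence. Likewise, your verification that the density $e^{-2|y|}$ is invariant under the kernel $y\mapsto\mathcal N(0,|y|)$ (via the hitting-time Laplace transform) is precisely the paper's check that $\mathcal E(2)$ solves the fixed-point equation $\mathcal X\eqdist\sqrt{\mathcal X}\,|\mathcal N|$, combined with the same uniqueness-by-contraction reasoning.
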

\begin{remark} \label{nonconv} Proposition \ref{fidi1} 
already implies that the sequence of processes $(W_n)$ is not tight 
for the topology of uniform convergence on compact intervals. 
Indeed, if this were the case, and since $W_n(0)=0$,
we would have that
$\sup_{n \geq 1} P(W_n(\varepsilon)>\eta) \to 0$ as $ \varepsilon \to 0$, 
for every $\eta>0$, and this contradicts Proposition \ref{onedim}.
\end{remark}

\proof By standard properties of Gaussian variables we have  the following chain of equalities in distribution \begin{eqnarray*} {W}_n(t) &=& B_n\Big(...\big(B_3(B_2(B_1(t)))\big)...\Big) \\
&\overset{(d)}{=}&B_n\Big(...\big(B_3(B_2(\sqrt{|t|}B_1(1)))\big)...\Big)\\
& \overset{(d)}{=}& B_n\Big(...\big(B_3(|t|^{1/4}\sqrt{|B_1(1)|}B_2(1))\big)...\Big)\\
& \overset{(d)}{=}& \pm t^{2^{-n}} \prod_{i=0}^{n-1} |\mathcal{N}_i|^{2^{-i}},  \end{eqnarray*} where $ \mathcal{N}_{i}$ are i.i.d.\,standard normal variables and $\pm$ is a independent fair random sign.  It is then easy to see that the modulus of the right hand side of the last display actually converges almost surely as $n \to \infty$. Indeed the series of $P(|\log(| \mathcal{N}_i|^{2^{-i}})| > i^{-2})$ is summable and an application of the Borel-Cantelli lemma proves 
the claim. Thus, if we set 
\begin{eqnarray*} \mathcal{X} &:=& \lim_{n \to \infty} \prod_{i=0}^{n-1} |\mathcal{N}_i|^{2^{-i}} \ \ a.s., 
\end{eqnarray*} we have  the convergence $W_n(t) \to \pm \mathcal{X}$ in distribution as $n \to \infty$, where $\pm$ is fair random sign independent of $ \mathcal{X}$. Notice that the limit does not depend on $t$ as long as $t \ne 0$. To identify the limit distribution, 
we note that $ \mathcal{X}$ satisfies the following recursive distributional equation   
\[
\mathcal{X} \overset{(d)}{=} \sqrt{ \mathcal{X}} \cdot |\mathcal{N}|. 
\tag{E} 
\] 
Iterating this equation and applying the same arguments as above, it is easy to see that it admits a unique fixed point as long as 
$ \mathcal{X} >0$ almost surely. Guided by the result of \cite{Tur04}, an easy computation shows that the exponential variable of density $ 2e^{-2x} \mathbf{1}_{x>0}$ also satisfies (E) 
thus completing the proof of the proposition. 
\endproof

\subsection{General case}
The goal of this section is to prove Theorem \ref{thmfidi} 
in the case $p \geq 2$. The convergence  \eqref{fidi} will be 
achieved by applying arguments from the theory of Harris chains.
The idea is to consider the random transformation which associates 
with any $p$ points $x_1, \ldots , x_p \in \mathbb{R}$ 
the images $B(x_1), \ldots , B(x_p)$ of these $p$ points under a 
two-sided Brownian motion $B$ and to show that
independent applications of this map possess an ergodic property.
That is, for any initial state $(x_1,\ldots,x_p)$, the 
distribution of $(W_n(x_1), \ldots, W_n(x_p))$ converges weakly to
a unique invariant probability measure.

Let $p \geq 2$.  Denote by $ \mathcal{R}^p$ the set of $p$-uplets 
$(x_1, \ldots, x_p)$ of pairwise distinct nonzero real numbers.
Note that if $\mathbf{x}=( x_{1}, ... , x_{p}) \in \mathcal{R}^p$ 
then its image $B(\mathbf{x}) := (B(x_1), \ldots , B(x_p))$
under a two-sided Brownian motion $B$ almost surely 
belongs to $\mathcal{R}^p$.

\begin{figure}[!h]
 \begin{center}
 \includegraphics[width=10cm]{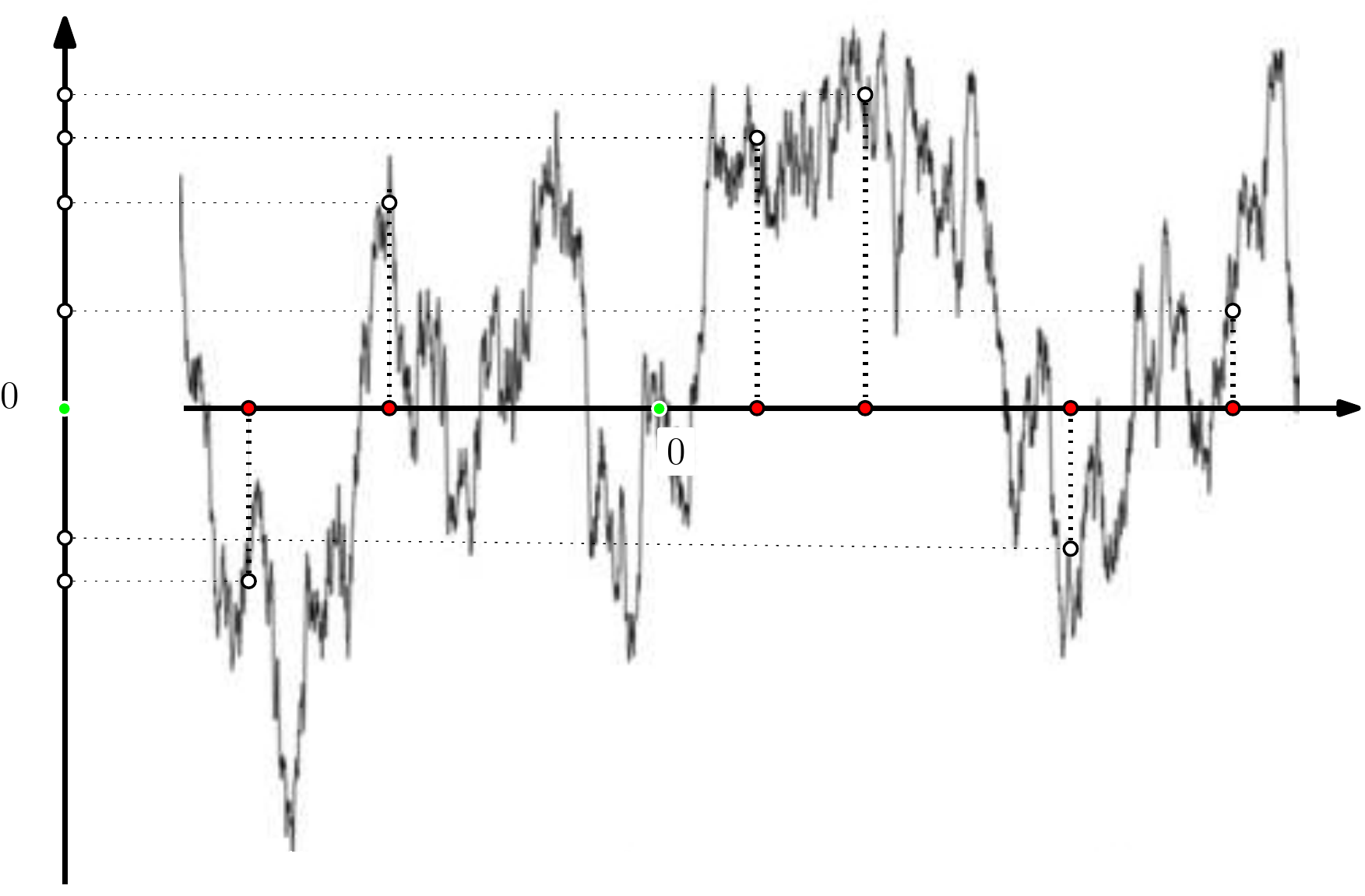}
 \caption{A $6$-uplet and its image after applying an independent Brownian motion.}
 \end{center}
 \end{figure}
 
\begin{proposition} \label{propfidi} 
For any $p \geq 2$, there is a unique probability measure $\nu_p$ on 
$\mathcal{R}^p$ such that if $(X_1, ... , X_p)$ is distributed according to 
$\nu_p$ and if $B$ is an independent two-sided Brownian motion then we have 
the equality in distribution
\begin{eqnarray} \label{stable} 
\big(B(X_i)\big)_{1 \leq i \leq p} &\eqdist& (X_i)_{1 \leq i \leq p}.  
\end{eqnarray}
Furthermore, for any $\mathbf{x}=(x_{1}, \ldots , x_{p}) \in \mathcal{R}^p$ 
we have the following convergence in distribution 
\[
\big( W_n(x_i)\big)_{1\leq i \leq p}  \xrightarrow[n\to\infty]{(d)} 
(X_i)_{1\leq i \leq  p}.  
\]
\end{proposition}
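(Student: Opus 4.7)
My plan is to view $\mathbf{W}_n := (W_n(x_1),\ldots,W_n(x_p))$ as a time-homogeneous Markov chain on the Polish state space $\mathcal{R}^p$ and to apply the classical theory of Harris recurrent chains. The transition kernel sends $\mathbf{x}$ to the law of $(B(x_1),\ldots,B(x_p))$ for $B$ an independent two-sided Brownian motion. Since the $x_i$ are pairwise distinct and nonzero, this law is a centred Gaussian on $\mathbb{R}^p$ whose covariance matrix is block diagonal (one block per sign of the coordinates of $\mathbf{x}$), with each block of the form $(\min(|x_i|,|x_j|))_{i,j}$, hence nondegenerate. Its density is therefore smooth and strictly positive on $\mathcal{R}^p$. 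This immediately yields strong $\psi$-irreducibility with respect to Lebesgue measure on $\mathcal{R}^p$, strong aperiodicity (the minorization holds in one step), and the fact that every compact subset of $\mathcal{R}^p$ is a small set: on any compact $C\subset\mathcal{R}^p$ the transition density is bounded below uniformly on a chosen compact $C'\subset\mathcal{R}^p$.

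The key technical step is tightness of $(\mathbf{W}_n)_{n\ge 1}$ in $\mathcal{R}^p$, i.e., for every $\eta>0$ producing a compact $K\subset\mathcal{R}^p$ with $\sup_n P(\mathbf{W}_n\notin K)<\eta$. Three kinds of escape must be ruled out: $|W_n(x_i)|\to\infty$, $W_n(x_i)\to 0$, and $W_n(x_i)-W_n(x_j)\to 0$. The first two follow from Proposition~\ref{onedim}, since the limit $\pm\mathcal{E}(2)$ is a proper distribution with no atom at $0$. The third is the genuine obstacle, and I would dispatch it by the following reduction: conditionally on $\mathbf{W}_{n-1}$, the difference $Z_n^{ij}:=W_n(x_i)-W_n(x_j)$ is a centred Gaussian with variance $|Z_{n-1}^{ij}|$, so $|Z_n^{ij}|$ obeys \emph{exactly} the distributional recursion $\mathcal{X}\eqdist\sqrt{\mathcal{X}}\,|\mathcal{N}|$ of (E) appearing in the proof of Proposition~\ref{onedim}. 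Repeating the Borel--Cantelli argument there gives $Z_n^{ij}\xrightarrow[n\to\infty]{(d)}\pm\mathcal{E}(2)$, which is in particular bounded away from $0$ in probability uniformly in $n$. A union bound over the finitely many coordinates and pairs then delivers tightness in $\mathcal{R}^p$.

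With $\psi$-irreducibility, aperiodicity, small sets, and tightness in hand, standard Harris chain theory (e.g.\ Meyn--Tweedie) produces a unique invariant probability measure $\nu_p$ on $\mathcal{R}^p$ and the convergence $\|P^n(\mathbf{x},\cdot)-\nu_p\|_{\mathrm{TV}}\to 0$ for every starting point $\mathbf{x}\in\mathcal{R}^p$; this is strictly stronger than the announced convergence in distribution, and identity~\eqref{stable} is just the invariance of $\nu_p$ under the kernel. The main obstacle in executing this plan is indeed the pairwise-difference tightness: the observation that $|Z_n^{ij}|$ satisfies the same distributional recursion as the one-dimensional process is what reduces this new tightness question to the already-solved Proposition~\ref{onedim}, and it is the linchpin of the whole argument.
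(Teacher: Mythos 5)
Your proposal is correct, and it reaches positive Harris recurrence by a genuinely different route than the paper. The paper uses the same Markov-chain setup on $\mathcal{R}^p$ and the same positive-continuous-density observation (to make the sparse sets $S_M$ petite), but then verifies a Foster--Lyapunov drift condition for the explicit potential $V(\mathbf{x})=\max_{1\le i\le p}|x_i|+\sum_{0\le i<j\le p}|x_i-x_j|^{-1/2}$ (with $x_0=0$), showing $E_{\mathbf{x}}[V(\mathcal W_1)]\le C\sqrt{V(\mathbf{x})}$ and invoking Theorem 13.0.1 of \cite{MT09}. You instead prove boundedness in probability of the orbit directly: coordinates stay away from $0$ and $\infty$ by Proposition \ref{onedim}, and pairwise differences stay away from $0$ because $|W_n(x_i)-W_n(x_j)|$ obeys the same distributional recursion --- this is exactly the stationarity-of-increments identity recorded later as Proposition \ref{stationarity}, derived by you independently of the present proposition, so there is no circularity. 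You then rely on the characterization of positivity of $\psi$-irreducible aperiodic T-chains through boundedness in probability (see \cite[Chapters 6, 12 and 18]{MT09}) rather than on a drift criterion; to make this airtight you should state explicitly that the chain is a T-chain (or strong Feller) on the open set $\mathcal{R}^p$, that your compact sets are compact \emph{in} $\mathcal{R}^p$ (bounded away from $0$, from infinity and from the diagonals, which your three-way estimate provides), and handle the finitely many initial indices $n$ separately, each $W_n(x_i)$ and each difference having a continuous law. The trade-off: your argument is softer and recycles the one-dimensional limit, at the cost of citing a heavier piece of the Meyn--Tweedie theory; the paper's drift computation is more hands-on and, as its remark notes, also yields a rate of convergence to $\nu_p$, which pure tightness cannot give. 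Both routes deliver \eqref{stable} as invariance of $\nu_p$ and total-variation convergence from every $\mathbf{x}\in\mathcal{R}^p$.
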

Notice that this argument does not give an explicit expression 
for the stationary probability measures $\nu_{p}$ but characterizes them 
uniquely by \eqref{stable}.  Let $p \geq 2$ be a fixed integer. 
Fix also a $p$-uplet $ \mathbf{x}=(x_{1}, \ldots , x_{p}) \in \mathcal{R}^p$. 
For every $n \geq 1$ we set 
\[
\mathcal{W}_{n} := (W_{n}(x_{1}), ... , W_{n}(x_{p})).  
\] 
Thus the process $ (\mathcal{W}_{n})_{n \geq 0}$ 
is a Markov chain with state space 
$ \mathcal{R}^p$, starting from $\mathcal{W}_0:= \mathbf{x}$,
with transition probability kernel given by 
\[
\mathsf P( \mathbf{y} ; A) = E\big[ (B(y_{1}), \ldots , B(y_{p})) \in A \big] , 
\]
for any $ \mathbf{y} \in \mathcal{R}^p$ and $A \subset \mathcal{R}^p$,
where $B$ is a two-sided Brownian motion. We denote by $E_{ \mathbf{x}}[\cdot]$ the expectation of this chain started from $ \mathbf{x} \in \mathcal{R}^p$. Since we restricted ourselves to $ \mathcal{R}^p$, the chain $( \mathcal{W}_{n})$ 
is easily seen to be irreducible with respect to the
$p$-dimensional Lebesgue measure on $ \mathcal{R}^p$ and aperiodic. 
We will show that the chain is in fact positive Harris recurrent, 
which will imply that it admits a unique invariant probability measure, 
and  Proposition \ref{propfidi} will directly flow from it, 
see \cite[Theorem 13.0.1]{MT09}. The key to prove this is to consider the following sets:

\begin{definition}
Fix $M>1$ a (large) real number. We say that the $p$-uplet $ \mathbf{x}=(x_{1}, ... , x_{p}) \in \mathcal{R}^p$ is $M$-sparse if we have 
$$  \displaystyle M^{-1} \le \sup_{1 \leq i \leq p}|x_i| \leq M,  \quad \mbox{ and }\quad 
 \displaystyle \min_{1 \leq i\ne j \leq p} |x_i-x_j| \geq M^{-1}.$$
 The set of all $M$-sparse $p$-uplets in $ \mathcal{R}^p$ is denoted by $ S_{M}$.
 \end{definition} The basic observation is that if $ \mathbf{x}=(x_1, ... , x_p)$ and $ \mathbf{y}=(y_1, ... , y_p)$ are two $M$-sparse $p$-uplets and if $B$ is a two-sided Brownian motion then the two random $p$-uplets $(B(x_1), \ldots , B(x_p))$ and $(B(y_1), \ldots , B(y_p))$ are mutually absolutely continuous with Radon-Nikod\'ym derivative bounded from below by a positive constant $c_{p,M}$ depending on $M$ and $p$ only. The reason is that the ratio of the two densities
$f(x_1, \ldots, x_p)/f(y_1, \ldots, y_p)$ is a continuous function on the compact set $S_M$. In other words, there exists two (dependent) Brownian motions $B$ and $\tilde{B}$ such that $B(x_1)=\tilde{B}(y_1), ... , B(x_p)= \tilde{B}(y_p)$ with probability at least $c_{p,M}$. Put
it otherwise, if we fix $ \mathbf{x} \in S_M$, then
\[
\mathsf P(\mathbf{y}, A) \ge c_{p,M} \mathsf P(\mathbf{y}, A),
\]
for all $\mathbf{y} \in S_M$ and all measurable $A \subset \mathcal R^p$.
Thus Ney's minorization condition holds and, therefore,
the set $ S_{M}$ is a petite set in the sense of \cite[Chapter 5]{MT09}. 
 
 \proof[Proof of Proposition \ref{propfidi}] 
In order to prove that $( \mathcal{W}_{n})$ is positive Harris recurrent, we will show that, for some $M > 0$, the expected return time to the petite set $S_{M}$, by the Markov chain
$(\mathcal W_n)$ started from $\mathbf x$, is bounded above by a finite constant, 
{\em uniformly} over all $\mathbf x \in S_M$.

The technical tool that we use here is the so-called drift condition, see \cite{MT09}, for
a Lyapunov or ``potential" function $V : \mathcal{R}^p \to  \mathbb{R}_{+}$. It is 
required that $V$ be unbounded on $\mathcal R^p$. The drift
of such a function is defined by
\[
DV(x) := E_{\mathbf x}[ V(\mathcal W_1)] - V(\mathbf x), \quad \mathbf x \in \mathcal R^p.
\]
Thus, $D$ is the generator of the Markov chain. We will show that there exists 
a Lyapunov function $V$ such that, for some $0 < a, b < \infty$,
\begin{eqnarray} \label{goal} DV(x) & \leq & -a + b \mathbf{1}_{ \mathbf{x} \in  S_{M}} , \end{eqnarray} for every $ \mathbf{x} \in \mathcal{R}^p$. 
More precisely, we show that the preceding condition is satisfied  for the function $V$ defined by
  \begin{eqnarray*} V(x_{1},\ldots , x_{p}) &:=& \max_{1 \leq i \leq p} |x_{i}| 
+ \sum_{0 \le i< j \le p} |x_{i}-x_{j}|^{-1/2},  \end{eqnarray*} and for some $a, b, M>0$. 
In the last display, we use the notation $x_0:=0$ to avoid extra terms in the definition of $V$.
It is convenient to consider the terms $U(\mathbf x) := \max_{1 \leq i \leq p} |x_{i}|$,
and $G(\mathbf x) := 
\sum_{0 \le i< j \le p} |x_{i}-x_{j}|^{-1/2}$
comprising $V$, separately. For any $\lambda > 0$,
\[
E_{\mathbf x} [U(\mathcal W_1)] 
= E\big[\max_{1 \le i \le p} |B(x_i)|\big] 
= \sqrt{\lambda} ~E\big[\max_{1 \le i \le p} |B(x_i/\lambda)|\big] 
\]
Letting $\lambda = U(\mathbf x)$, we thus have
\[
E_{\mathbf x} [U(\mathcal W_1)] \le C_1 \sqrt{U(\mathbf x)},
\]
where $C_1 = E\big[\max\{|B(t)| : -1 \leq t \leq 1\} \big]$.
For the other term, we have, with $ \mathcal{N}$ a standard normal variable,
\begin{multline*}
E_{\mathbf x} [G(\mathcal W_1)] 
= 
\sum_{0 \le i< j \le p} E\big[ |B(x_i) - B(x_j)|^{-1/2} \big]
= \sum_{0 \le i< j \le p}  |x_i-x_j|^{-1/4} E[ |\mathcal{N}|^{-1/2}]
\\ \le C_2 \sqrt{G(\mathbf x)},
\end{multline*}
where $C_2 = p E[ |\mathcal{N}|^{-1/2}]$.
Putting the terms together, we find
\[
E_{\mathbf x} [V(\mathcal W_1)] \le C_3 \sqrt{V(\mathbf x)},
\]
where $C_3 = 2 \max(C_1, C_2)$.
Thus, for all $x \in \mathcal R^p$,
\[
DV(\mathbf x) \le C_3 \sqrt{V(\mathbf x)} - V(\mathbf x).
\]
If $\mathbf x \in S_M$ then $V(\mathbf x) \le M + (p+1)^2 \sqrt{M} < \infty$.
If $\mathbf x \not \in S_M$ then $|x_i| > M$ for some $i$ or $|x_i-x_j| < 1/M$ for some
$0 \le i< j\le p$. In the first instance, $V(\mathbf x) > M$; in the second instance,
$V(\mathbf x) > \sqrt{M}$. So, for $M > 1$, we have $V(\mathbf x) > \sqrt{M}$, for 
all $\mathbf x \not \in S_M$. Let $M = \max(16 C_3^4, 1)$. Then for $\mathbf x \not \in S_M$,
we have $\sqrt{V(\mathbf x)} > 2 C_3$, implying that
$DV(\mathbf x) < - C_3 \sqrt{V(\mathbf x)} < - C_2 M^{1/4}$. Thus \eqref{goal} holds.
Theorem 13.0.1 in \cite{MT09} now applies, showing positive Harris recurrence. This finishes the proof of the proposition. 
\endproof 

\begin{remark}
The proof actually shows that there are constants $C_4, C_5 >0$ such that
\[
DV(\mathbf x) \le -C_4 \sqrt{V(\mathbf x)},
\]
when $V(\mathbf x)> C_5$, enabling one to establish a rate of convergence towards the
stationary distribution. We shall not pursue this further herein.
\end{remark}

\subsection{Properties of the $\nu_{p}$'s} 
We can think of $\nu_p$ as a measure on the product space $\mathbb R^p$
(by letting it have mass $0$ outside $\mathcal R^p)$.
The probability measures $(\nu_{p})_{p \geq 1}$ are consistent.
This follows from the fact that $\nu_p$ is the limit in distribution of
$(W_n(x_1), \ldots, W_n(x_p))$, whereas $\nu_{p+1}$
is the limit of $(W_n(x_1), \ldots, W_n(x_p), W_n(x_{p+1}))$, for 
any $(x_1, \ldots, x_p, x_{p+1}) \in \mathcal R^p$.
Therefore $\nu_p$ is the projection of $\nu_{p+1}$ on the first
$p$ coordinates. By Kolmogorov's extension theorem, there is a probability
measure $\nu$ on $\mathbb{R}^ \mathbb{N}$
such that $\nu_p$ is obtained from $\nu$ by projecting on
the first $p$ coordinates.
The family $((\nu_{p})_{p \geq 1}, \nu)$ has some further properties.

\subsubsection{Exchangeability}
The last statement of Proposition \ref{propfidi} actually shows
that $\nu$ is exchangeable, that is, invariant under permutations
of finitely many coordinates. 
Let $(X_1, X_2, \ldots)$ be a random element of $\mathbb R^{\mathbb N}$
with law $\nu$.
By the classical de Finetti/Ryll-Nardzewski/Hewitt-Savage theorem 
(Theorem 11.10 \cite{Kal02})
we know that there exists a random element of $\mathcal M$ 
(random probability measure) $\mu_\infty$, such that
the law of $(X_{1}, X_{2}, \ldots)$ conditionally on $\mu_\infty$
is a product measure (the law of i.i.d.\ random variables).
The random measure $\mu_\infty$ will be 
identified and interpreted in the next section as the occupation measure 
of the infinitely iterated Brownian motion. 
\subsubsection{Stationarity of the increments}
The family of measures $(\nu_{p})_{p \geq 1}$ also possesses another 
property which can be described as
stationarity of the increments. 
\begin{proposition} \label{stationarity}
Let $p \geq 2$ and let $(X_{1}, X_{2}, \ldots , X_{p})$ be distributed 
according to $\nu_{p}$. Then for every $1 \leq \ell \leq p$ we have 
\begin{eqnarray*}
(X_{1}-X_{\ell}, \ldots , X_{\ell-1}-X_{\ell},X_{\ell+1}-X_{\ell}, \ldots , X_{p}-X_{\ell}) & \overset{(d)}{=}& \nu_{p-1}.  
\end{eqnarray*}
\end{proposition}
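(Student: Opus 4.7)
The plan is to combine the invariance relation \eqref{stable} with the uniqueness clause of Proposition \ref{propfidi}, applied after passing to increments. Let me take $(X_1, \ldots, X_p) \sim \nu_p$ and an independent two-sided Brownian motion $B$; by \eqref{stable}, the vector $(B(X_1), \ldots, B(X_p))$ is again distributed as $\nu_p$, and in particular
\[ \bigl(B(X_i) - B(X_\ell)\bigr)_{i \neq \ell} \eqdist (X_i - X_\ell)_{i \neq \ell}. \]

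Next, I would factor the left-hand side through a \emph{fresh} Brownian motion evaluated at the differences. Set $\widetilde B(t) := B(t + X_\ell) - B(X_\ell)$. Since $B$ is independent of $(X_1, \ldots, X_p)$, conditioning on this vector and using translation invariance of Wiener measure yields, for every bounded measurable $\phi$, the identity $E[\phi(\widetilde B) \mid X_1, \ldots, X_p] = E[\phi(B)]$. Consequently $\widetilde B$ is itself a two-sided Brownian motion, independent of $(X_1, \ldots, X_p)$. Because $B(X_i) - B(X_\ell) = \widetilde B(X_i - X_\ell)$, writing $\mathbf Y := (X_i - X_\ell)_{i \neq \ell}$ the display above rewrites as
\[ \bigl(\widetilde B(Y_1), \ldots, \widetilde B(Y_{p-1})\bigr) \eqdist \mathbf Y, \]
with $\widetilde B$ independent of $\mathbf Y$.

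To conclude I verify that $\mathbf Y \in \mathcal R^{p-1}$ almost surely: each coordinate $Y_i = X_i - X_\ell$ is nonzero and each pairwise difference $Y_i - Y_j = X_i - X_j$ is nonzero, both because $(X_1, \ldots, X_p) \in \mathcal R^p$ a.s. The law of $\mathbf Y$ is therefore a probability measure supported on $\mathcal R^{p-1}$ which satisfies the fixed-point equation \eqref{stable} at level $p-1$, so by the uniqueness statement of Proposition \ref{propfidi} it must coincide with $\nu_{p-1}$. The only point requiring care is the factorization step and the verification that $\widetilde B$ is an honest Brownian motion jointly independent of $\mathbf Y$; once that is in hand, the proposition is an immediate consequence of the characterization of $\nu_{p-1}$ as the unique invariant distribution.
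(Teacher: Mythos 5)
Your argument is correct, and it takes a genuinely different route from the paper's. The paper works at the pre-limit level: by induction on $n$ and elementary manipulations of Gaussian variables it shows that $(W_n(x_i)-W_n(x_\ell))_{i\neq\ell}$ has the same law as $(W_n(x_i-x_\ell))_{i\neq\ell}$ for each fixed $n$, and then lets $n\to\infty$, applying the convergence part of Proposition \ref{propfidi} to both sides (the left side converging to the law of the increments under $\nu_p$, the right side to $\nu_{p-1}$ since $(x_i-x_\ell)_{i\neq\ell}\in\mathcal R^{p-1}$). You instead work directly with the stationary law: starting from the invariance \eqref{stable}, you rewrite the increments $B(X_i)-B(X_\ell)$ as $\widetilde B(X_i-X_\ell)$ with $\widetilde B(t):=B(t+X_\ell)-B(X_\ell)$, verify (correctly, by conditioning on $(X_1,\ldots,X_p)$ and using the stationary independent increments of a two-sided Brownian motion) that $\widetilde B$ is again a two-sided Brownian motion independent of $(X_1,\ldots,X_p)$, check that $(X_i-X_\ell)_{i\neq\ell}\in\mathcal R^{p-1}$ almost surely, and conclude by the uniqueness clause of Proposition \ref{propfidi}. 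Your route trades the induction on $n$ for the uniqueness of the invariant measure, and it never has to revisit the finite iterations, while the paper's route only uses the (easier) convergence statement and avoids the shift construction. One small caveat: when $p=2$ the target law is $\nu_1$, and Proposition \ref{propfidi} is stated only for $p\geq 2$; for that case you should appeal to the one-dimensional uniqueness coming from the fixed-point equation (E) in the proof of Proposition \ref{onedim} (or observe that the Harris-chain argument goes through verbatim for $p=1$). The paper's own proof relies on the same implicit extension, so this is a matter of citation rather than a gap in your reasoning.
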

\begin{proof}  Let $(x_{1}, x_{2}, \ldots , x_{p}) \in \mathcal{R}^p$. 
It is easy to prove by induction on $n \geq 1$ and using elementary 
manipulation of the Gaussian distribution that the random vector 
$(W_{n}(x_{i})-W_{n}(x_{\ell}))_{i\ne \ell }$ has the same distribution as 
the vector $(W_{n}(x_{i}-x_{\ell}))_{i \ne \ell}$. 
Notice that the vector $(x_{i}-x_{\ell})_{i \ne \ell} \in \mathcal{R}^{p-1}$, 
thus we can apply Proposition \ref{propfidi} and this finishes 
the proof of the proposition.
\end{proof}

\section{Occupation measure of the  infinitely iterated Brownian motion}
\subsection{Existence of $\mu_\infty$}
Recall the definition of the random measures $\mu_{n}$ given by formula 
\eqref{mun} in the Introduction, as well as the notion of convergence
in distribution of random elements of $(\mathcal M, \mathcal T)$.
With Theorem \ref{thmfidi} in our hands, it is now easy to prove convergence in distribution of the random measures $\mu_n$. 
We first need a lemma, characterizing convergence in distribution of random
elements of $(\mathcal M, \mathcal T)$, tailor-made for our case. The lemma
can be of independent interest.

\begin{lemma} \label{convergence}
Let $\lambda_1, \lambda_2, \ldots$ be random probability measures on 
$ \mathbb{R}$. The following are equivalent:
\begin{enumerate}
\item[(i)]
The sequence $\lambda_n$ converges in distribution to some
random probability measure.
\item[(ii)]
For each $n\ge 1$, and conditionally on $\lambda_n$, let
$X^n_1, X^n_2, \ldots$ be i.i.d.\ real-valued random variables each with
(conditional) law $\lambda_n$:
\[
P(X^n_1 \in A_1, \ldots, X^n_p \in A_p \mid \lambda_n) 
= \lambda_n(A_1) \cdots \lambda_n(A_p), \quad \text{ a.s.},
\]
for all $p \ge 1$, and Borel sets $A_1, \ldots, A_p$.
The random vector $(X^n_1, \dots, X^n_p)$ converges in distribution,
as $n \to \infty$, to some probability measure on $\mathbb R^p$.
%
\end{enumerate}
\end{lemma}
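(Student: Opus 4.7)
The plan is to prove both directions by repeatedly invoking the basic identity
\[
E\bigl[f_1(X^n_1)\cdots f_p(X^n_p)\bigr] \;=\; E\bigl[\lambda_n(f_1)\cdots\lambda_n(f_p)\bigr],
\]
valid for any bounded Borel $f_1,\ldots,f_p:\mathbb{R}\to\mathbb{R}$, which follows from conditioning on $\lambda_n$ and using the conditional i.i.d.\ structure of the $X^n_k$. I will refer to this as the \emph{key identity}.

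For (i)$\Rightarrow$(ii): since $\lambda_n$ and the limit $\lambda$ are a.s.\ probability measures, vague convergence to an element of $\mathcal{M}_1 := \{\mu\in\mathcal{M} : \mu(\mathbb{R})=1\}$ coincides with weak convergence; hence $\lambda_n$ converges to $\lambda$ in distribution in the Polish space of probability measures under the weak topology. Skorokhod's representation theorem then provides a common probability space on which $\lambda_n \to \lambda$ weakly almost surely (and on which the $X^n_k$'s can be rebuilt as conditionally i.i.d.\ given $\lambda_n$ by the quantile transform applied to independent uniforms). For any bounded continuous $f_1,\ldots,f_p$, one has $\lambda_n(f_i)\to\lambda(f_i)$ a.s., and dominated convergence combined with the key identity yields $E[f_1(X^n_1)\cdots f_p(X^n_p)]\to E[f_1(X_1)\cdots f_p(X_p)]$, where $X_1,\ldots,X_p$ are conditionally i.i.d.\ with common law $\lambda$. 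This is exactly the convergence in distribution claimed in (ii).

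For (ii)$\Rightarrow$(i): I first identify the candidate limit. For every $n$, the sequence $(X^n_k)_{k\geq 1}$ is unconditionally exchangeable, so the assumed convergence in distribution at every level $p$, together with consistency under coordinate projections and Kolmogorov's extension theorem, yields an exchangeable probability law on $\mathbb{R}^{\mathbb{N}}$. By the de Finetti--Hewitt--Savage theorem this law is that of a sequence $(X_k)_{k\geq 1}$ which is conditionally i.i.d.\ given a random probability measure $\lambda$; this $\lambda$ is the natural candidate. To verify that $\lambda_n \xrightarrow[]{(d)} \lambda$ in $(\mathcal{M},\mathcal{T})$, it suffices by Theorem 4.2 of \cite{Kal76} to establish the joint convergence $(\lambda_n(f_1),\ldots,\lambda_n(f_m))\xrightarrow[]{(d)}(\lambda(f_1),\ldots,\lambda(f_m))$ for every $f_1,\ldots,f_m\in C_K$. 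I would do this by the moment method: for any multi-index $(k_1,\ldots,k_m)$ with total size $K$, the key identity applied with $k_i$ copies of $f_i$ gives
\[
E\bigl[\lambda_n(f_1)^{k_1}\cdots\lambda_n(f_m)^{k_m}\bigr] = E\bigl[f_1(X^n_1)\cdots f_1(X^n_{k_1})\,f_2(X^n_{k_1+1})\cdots f_m(X^n_K)\bigr],
\]
and the right-hand side is the expectation of a bounded continuous function of $(X^n_1,\ldots,X^n_K)$, which converges by (ii). Since each $\lambda_n(f_i)$ lies in $[-\|f_i\|_\infty,\|f_i\|_\infty]$, the joint laws are supported in a common compact box, so convergence of all mixed moments upgrades to distributional convergence on $\mathbb{R}^m$.

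The main subtlety lies in the first direction: vague convergence only guarantees a priori convergence tested against compactly supported continuous functions, whereas (ii) naturally produces convergence against bounded continuous functions of the $X^n_i$'s. The mass-preservation coming from the probability-measure assumption, together with the Skorokhod representation on $(\mathcal{M}_1,\text{weak})$, is what closes this gap. The converse direction is essentially a textbook application of the moment method combined with de Finetti's theorem; the only care needed is the index bookkeeping in the key identity.
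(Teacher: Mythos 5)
Your proof is correct, but in the direction (ii)$\Rightarrow$(i) it takes a genuinely different route from the paper. The paper fixes $f\in C_K$ and compares $\lambda_n f$ with the empirical average $p^{-1}\sum_{i=1}^p f(X^n_i)$: conditional Chebyshev gives the uniform-in-$n$ bound $\|f\|_\infty^2/(\varepsilon^2 p)$, the empirical average converges in law for each fixed $p$ by hypothesis, and a two-step approximation yields convergence of $\lambda_n f$, hence of $\lambda_n$ via Kallenberg's Theorem 4.2; because that limit is produced abstractly, the paper then needs a separate argument (tightness of $X^n_1$ plus Theorem 4.9 of the 1986 edition of Kallenberg) to see that the limit has mass one. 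You instead identify the limit up front as the de Finetti directing measure of the limiting exchangeable law and prove joint convergence of $(\lambda_n(f_1),\dots,\lambda_n(f_m))$ by the method of moments, using that these variables sit in a fixed compact box. Your route buys a limit that is a probability measure by construction (no mass-one step) and joint convergence of the integrals; the paper's route is more elementary, needing only Chebyshev and the one-dimensional criterion, with no de Finetti, Kolmogorov extension, or moment determinacy. For (i)$\Rightarrow$(ii), which the paper dismisses as an easy consequence of Kallenberg's Theorem 4.2, your Skorokhod argument is a legitimate filling-in; just state carefully the passage from convergence in distribution in $(\mathcal M,\mathcal T)$ to convergence in distribution in the weak topology on probability measures (the identity map on all of $\mathcal M$ is not continuous, so one uses the portmanteau open-set criterion plus the fact that the trace of the vague topology on probability measures is the weak topology), and that products $\prod_j f_j(x_j)$ of bounded continuous functions, e.g.\ complex exponentials, form a convergence-determining class on $\mathbb R^p$; these are standard points of presentation, not gaps.
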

\begin{proof}
The implication (i) $\Rightarrow$ (ii) is an easy consequence of 
\cite[Theorem 4.2]{Kal76}.
For the other direction, fix $f \in C_K$.
We will show that the random variable $\lambda_n f 
= \int_{\mathbb R} f \mathrm{d} \lambda_n$
converges in distribution as $n \to \infty$.
Consider the random
probability measure   \begin{eqnarray*} \xi_n^{(p)} &:=& p^{-1} \sum_{i=1}^p \delta_{X^n_i}  \end{eqnarray*}
(i.e., the empirical distribution of $(X^n_1, \ldots, X^n_p)$).
We compare $\lambda_n f$ to $\xi_n^{(p)} f = p^{-1} \sum_{i=1}^p f(X_{i}^n)$.
We have
\[
P(|\lambda_{n}f - \xi_n f| \ge \varepsilon) 
= E\big[ P(|\lambda_{n}f - \xi_n f| \ge \varepsilon \mid \lambda_n) \big]
\le \frac{ \|f\|_{\infty}^2}{ \varepsilon^2 p},
\]
by Chebyshev's inequality.
On the other hand, $\xi_n^{(p)} f$ converges in distribution as $n \to \infty$.
Hence $\lambda_n f$ converges in distribution, and thus
$\lambda_n$ converges in distribution to some random measure $\lambda$.
 It remains to prove that $\lambda$ almost surely has mass one. Since $X_{1}^n$ converges in distribution, it follows that the sequence of random variables $\{X_{1}^n\}_{n \geq 1}$ is tight. Thus for every $\varepsilon>0$ there exists $M>0$ such that $P(|X_{n}^1|>M) \leq \varepsilon$ for every $n \geq 1$. Conditionally on $\lambda_{n}$ we have $P(|X_{1}^n|>M \mid \lambda_{n}) = 1-\lambda_{n}([-M,M])$. Taking expectation we deduce that  $E[\lambda_{n}([-M,M])] \geq 1-\varepsilon$ for every $n \geq 1$. This is sufficient to apply Theorem 4.9 in \cite{Kal86} and deduce that $\lambda$ is almost surely a random probability measure.
\end{proof}

Let us go back to our setting and show that the occupation measures $\mu_n$
converge to a random probability measure $\mu_\infty$. 
Let $p \geq 1$ and, conditionally on $\mu_{n}$, 
let $X_1^{n}, \ldots, X_p^{n}$ be i.i.d.\ random variables with
common distribution $\mu_n$. 
We show that $(X_1^{n}, \ldots, X_p^{n})$ converges (unconditionally) 
towards the random vector $(X_1, ..., X_p)$ of law $\nu_p$ 
identified in Proposition \ref{propfidi}. 
Indeed, by the definition of $\mu_n$, for any Borel bounded function 
$f: \mathbb{R}^p \to \mathbb{R}$ we have, by Fubini's theorem,
\begin{eqnarray*} 
E[ f(X_1^{(n)}, ...,X_p^{(n)})] 
&=& E\left[\int_0^1 \mathrm{d}u_1...\mathrm{d}u_p \  
f(W_n(u_1), ... , W_n(u_p))\right]\\
& =& \int_0^1 \mathrm{d}u_1...\mathrm{d}u_p \  
E\big[f(W_n(u_1), ... , W_n(u_p))\big].  
\end{eqnarray*} 
The last integral converges towards $E[f(X_1,...,X_p)]$ as $n\to \infty$ because of Theorem 	\ref{thmfidi} and dominated convergence. Applying Lemma \ref{convergence} we get the existence of a random probability measure $\mu_{ \infty}$ such that $\mu_n \to \mu_{ \infty}$ in distribution as $n \to \infty$.

\subsection{Support of $\mu_\infty$} 
\begin{proposition} \label{propsupport}
Almost surely, the random probability measure $\mu_\infty$ 
has a bounded support.
\end{proposition}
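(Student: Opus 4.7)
The plan is to control the radius of the support of $\mu_n$, namely $\rho_n := \max_{t\in[0,1]} |W_n(t)|$, show that $(\rho_n)_{n\ge 1}$ is tight, and then transfer this to an almost-sure bound on the support of $\mu_\infty$.

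First I will set up a recursive stochastic bound. Because $W_{n+1}=B_{n+1}\comp W_n$ takes values in $B_{n+1}([-\rho_n,\rho_n])$, one has the pathwise inequality
\[
\rho_{n+1} \le \sup_{|s|\le\rho_n} |B_{n+1}(s)|.
\]
Rescaling $\tilde B_{n+1}(u):=B_{n+1}(\rho_n u)/\sqrt{\rho_n}$, which is a standard Brownian motion independent of $\mathcal{F}_n:=\sigma(B_1,\ldots,B_n)$, the right-hand side equals $\sqrt{\rho_n}\,M_{n+1}$, where $M_{n+1}:=\sup_{|u|\le 1}|\tilde B_{n+1}(u)|$ is independent of $\mathcal{F}_n$ and distributed as $\sup_{|s|\le 1}|B(s)|$. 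Iterating the resulting inequality $\rho_{n+1}\le \sqrt{\rho_n}\,M_{n+1}$ from $\rho_0=1$ and taking logarithms yields
\[
\log\rho_n \le \sum_{j=0}^{n-1} 2^{-j} \log M_{n-j},
\]
with $(M_k)_{k\ge 1}$ i.i.d. Since $E[|\log M_1|]<\infty$ (elementary bounds on the Brownian supremum), the right-hand side is equidistributed with $\sum_{j=1}^{n} 2^{-(j-1)} \log M_j$, which converges almost surely to a finite limit by absolute summability in expectation. In particular $(\rho_n)$ is tight.

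Next I convert tightness of $(\rho_n)$ into bounded support for $\mu_\infty$ using the sampling lemma. Given $\mu_n$, let $X_1^n,\ldots,X_p^n$ be i.i.d.\ draws from $\mu_n$; by Lemma \ref{convergence} and the discussion preceding the proposition, $(X^n_1,\ldots,X^n_p)$ converges in distribution to $(X_1,\ldots,X_p)$, which are conditionally i.i.d.\ of law $\mu_\infty$. The deterministic inequality $\max_{i\le p}|X^n_i|\le\rho_n$ holds pathwise, and Portmanteau applied to the open half-line $(R,\infty)$ yields
\[
P\Big(\max_{i\le p}|X_i|>R\Big)\;\le\;\liminf_n P(\rho_n>R),
\]
uniformly in $p$. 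Letting $p\to\infty$ along the increasing events $\{\max_{i\le p}|X_i|>R\}$ and using continuity of $P$ from below, the same bound holds with $\max_{i\le p}|X_i|$ replaced by $\sup_i|X_i|$. Conditionally on $\mu_\infty$ the $X_i$ are i.i.d.\ from $\mu_\infty$, so $\sup_i|X_i|$ equals the radius of $\mathrm{supp}(\mu_\infty)$ almost surely; thus this radius is finite a.s., which is the claim of the proposition.

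The main obstacle is this last step. The map $\mu\mapsto\mathrm{diam}(\mathrm{supp}\,\mu)$ is not continuous for the vague topology, so tightness of $(\rho_n)$ cannot be directly passed through the convergence $\mu_n\to\mu_\infty$ via a continuous-mapping argument. Routing through the i.i.d.\ sampling representation of Lemma \ref{convergence} replaces this topological difficulty with a clean application of Portmanteau plus monotone continuity of probability; once this is accepted, all the analytic work is concentrated in the recursive bound of the first step, where identifying the self-similar multiplicative structure of $(\rho_n)$ is the only genuinely nontrivial observation.
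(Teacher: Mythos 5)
Your proposal is correct and takes essentially the same route as the paper: a Brownian-scaling recursion controlling the size of the range of $W_n$ on $[0,1]$ (the paper's Lemma \ref{oscillation} for the oscillation $\Delta_n(1)$, which dominates your $\rho_n$ since $W_n(0)=0$), followed by a transfer to $\mu_\infty$ through conditionally i.i.d.\ sampling (the paper's Lemma \ref{support}). The only differences are in execution, not substance: you work with a pathwise inequality rather than the distributional identity, and your transfer step is a direct Portmanteau-plus-monotone-limit estimate on the support radius instead of the paper's argument by contradiction.
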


In order to prove the last proposition we use a very general fact:
\begin{lemma}  \label{support} Let $\lambda_n$ be a sequence of 
random probability measures converging in distribution towards 
a random probability measure $\lambda_\infty$. Suppose that the support 
of $\lambda_n$ is contained in $[A_n,B_n]$ and that the sequences of 
random variables $(A_n)$ and $(B_n)$ are tight. 
Then $\lambda_\infty$ has a compact support, almost surely. 
\end{lemma}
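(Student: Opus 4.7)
The strategy is to convert the tightness of the endpoints $(A_n)$ and $(B_n)$ into high-probability compact confinement of $\lambda_\infty$, using a bump function in $C_K$ as the bridge between the assumed vague convergence in distribution and a statement about supports.

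Fix $\varepsilon > 0$. By tightness, I would first choose $M > 0$ so large that $P(A_n < -M) + P(B_n > M) \le \varepsilon$ for every $n \ge 1$, and then pick a continuous function $\varphi : \mathbb R \to [0,1]$ supported in $[-M-1, M+1]$ with $\varphi \equiv 1$ on $[-M, M]$. On the event $\{A_n \ge -M,\, B_n \le M\}$ the support of $\lambda_n$ is contained in $\{\varphi = 1\}$, and since $\lambda_n$ is a probability measure this yields $\lambda_n \varphi = 1$. Consequently
\[
P\bigl(1 - \lambda_n \varphi > \eta\bigr) \le \varepsilon \qquad \text{for every } \eta > 0 \text{ and every } n \ge 1.
\]
Since $\varphi \in C_K$, the hypothesis together with \cite[Theorem 4.2]{Kal76} gives $\lambda_n \varphi \xrightarrow[]{(d)} \lambda_\infty \varphi$. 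Applying the portmanteau theorem to the open half-line $(\eta, \infty)$ and then letting $\eta \downarrow 0$ (using continuity of $P$ along the increasing union $\{1-\lambda_\infty \varphi > \eta\}\uparrow \{1-\lambda_\infty \varphi>0\}$), I obtain
\[
P(\lambda_\infty \varphi = 1) \ge 1 - \varepsilon.
\]

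Because $\lambda_\infty$ is a probability measure and $\varphi \le \mathbf{1}_{[-M-1, M+1]}$ pointwise, the equality $\lambda_\infty \varphi = 1$ forces $\lambda_\infty([-M-1, M+1]) = 1$; hence $\operatorname{supp}(\lambda_\infty) \subseteq [-M-1, M+1]$ with probability at least $1 - \varepsilon$. Since $\varepsilon$ was arbitrary, $\lambda_\infty$ has bounded, hence compact, support almost surely. The only delicate point is this last transfer from the test-function identity $\lambda_\infty \varphi = 1$ to a statement about $\operatorname{supp}(\lambda_\infty)$: it depends both on squeezing $\varphi$ between two indicator functions of compact intervals (so vague convergence has purchase) and on the standing assumption that $\lambda_\infty$ is a \emph{probability} measure, without which $\lambda_\infty \varphi = 1$ would not upgrade to $\lambda_\infty([-M-1, M+1]) = 1$.
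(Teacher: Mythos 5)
Your argument is correct, and it takes a genuinely different route from the paper. The paper proves the lemma by contradiction using the sampling device it set up in Lemma \ref{convergence}: assuming $\lambda_\infty$ has unbounded support with probability at least $\varepsilon$, it draws, conditionally on $\lambda_n$ (resp.\ $\lambda_\infty$), i.i.d.\ points $X_1^n,\dots,X_p^n$ (resp.\ $X_1^\infty,\dots,X_p^\infty$), and for $p$ large compares $P\bigl(\sup_i |X_i^n|\ge M\bigr)$, which is at most $\varepsilon/5$ by tightness of the endpoints, with $P\bigl(\sup_i |X_i^\infty|\ge M\bigr)$, which is bounded below by $\varepsilon(1-e^{-1})/2$ under the unboundedness assumption; the convergence in distribution of the sampled vectors then gives the contradiction. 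You instead work directly with a bump function $\varphi\in C_K$, use the continuous-mapping/portmanteau step for the real random variables $\lambda_n\varphi$, and pass to the limit $\eta\downarrow 0$ to get the explicit quantitative confinement $P\bigl(\operatorname{supp}\lambda_\infty\subseteq[-M-1,M+1]\bigr)\ge 1-\varepsilon$. What each approach buys: yours is more elementary and direct (no contradiction, no conditional sampling construction), uses only the definition of convergence in distribution for the vague topology, and isolates correctly the one delicate point, namely that $\lambda_\infty\varphi=1$ upgrades to $\lambda_\infty([-M-1,M+1])=1$ only because $\lambda_\infty$ is a.s.\ a probability measure; the paper's route reuses machinery already in place (conditionally i.i.d.\ samples, as in Lemma \ref{convergence} and in the identification of $\mu_\infty$), which keeps the section's toolkit uniform at the cost of a slightly more roundabout contradiction argument. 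Both proofs rest on the same essential input, that tightness of $(A_n)$ and $(B_n)$ confines $\lambda_n$ to a fixed compact interval with probability close to one uniformly in $n$.
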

\proof Let us argue by contradiction and suppose that $\lambda_\infty$ 
has  probability at least $\varepsilon>0$ of having an unbounded support. 
By the assumption made on $A_n$ and $B_n$ there exists $M>0$ such that $
P(|A_n| \geq M) \leq \varepsilon/10$ and 
$P(|B_n| \geq M) \leq \varepsilon/10$ for every $n \geq 1$.  
For the $M>0$ chosen above, there exists $\delta >0$ such that  
we have $\lambda_{\infty}(]-{M,M}[^c)> \delta$ 
with probability at least $\varepsilon/2$. 
Now choose $p \geq \delta^{-1}$. 
Suppose that, conditionally on $\lambda_\infty$,
the random variables $X_1^{\infty}, \ldots, X_p^\infty$ are
i.i.d.\ with common law $\lambda_\infty$.  
We then have 
\begin{eqnarray} 
P\left(\sup_{1\leq i \leq p} |X_i^\infty| \geq M\right) 
& = & E\left[\left.P\left(\sup_{1\leq i \leq p} |X_i^\infty| \geq M \ \right| \
\lambda_{\infty}(]-M,M[^c)> \delta\right)\right]\nonumber \\ 
& \geq & \frac{\varepsilon}{2} \left(1- (1- \delta)^{p}\right)  
\quad \geq \quad \varepsilon (1-e^{-1})/2. 
\label{infini}  
\end{eqnarray}
On the other hand, if $X_1^n, \ldots ,X_p^n$ are i.i.d., 
conditionally on $\lambda_n$, we have
\begin{eqnarray} 
P \left( \sup_{1 \leq i \leq p}|X_i^n| \geq M  \right) 
& \leq & P(|A_n|\geq M) +P(|B_n|  \geq M)  \quad \leq \quad  \varepsilon/5. 
\label{fini}
\end{eqnarray} 
Since for $p$ fixed we have 
$(X_i^n)_{1 \leq i \leq p} \to (X_i^\infty)_{1 \leq i \leq p}$ in distribution 
as $n \to \infty$, 
comparing \eqref{infini} and \eqref{fini} leads to a contradiction. 
\endproof

We further establish a result on the limit of the oscillation of $W_n$ on
an interval, as $n \to \infty$. Recall, that the oscillation of
a function $f$ on an interval $J$ is defined by
$\osc(f; J) := \sup_{s,t \in J} |f(t)-f(s)|$,
and let, for $t > 0$,
\[
\Delta_n(t) := \osc(W_n; [0,t]).
\]
\begin{lemma}
\label{oscillation}
Let $D := \osc(B; [0,1])$ be the oscillation of a Brownian motion
on a unit  interval and let $D_1, D_2, \ldots$ be i.i.d.\ copies of $D$.
Then, for all $t > 0$,
 \begin{eqnarray*}
\Delta_n(t) &\xrightarrow[n \to \infty]{(d)}&
\prod_{i=0}^\infty D_i^{2^{-i}}.
 \end{eqnarray*}
\end{lemma}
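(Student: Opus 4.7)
The plan is to derive a distributional recursion for $\Delta_n(t)$ in terms of $\Delta_{n-1}(t)$ and an independent Brownian scaling factor, iterate it, and pass to the limit. The key observation is that since $W_{n-1}$ is a.s.\ continuous on $[0,t]$ with $W_{n-1}(0)=0$, its image is a (random) closed interval $[m_{n-1}, M_{n-1}]$ whose length is precisely $\Delta_{n-1}(t)$. Therefore
\[
\Delta_n(t) \;=\; \osc(B_n \comp W_{n-1};\, [0,t]) \;=\; \osc(B_n;\, [m_{n-1}, M_{n-1}]).
\]
Since $B_n$ is a two-sided Brownian motion independent of $W_{n-1}$, the stationarity of Brownian increments together with the Brownian scaling relation $(B_n(\lambda s))_{s \in \mathbb R} \eqdist (\sqrt \lambda B_n(s))_{s \in \mathbb R}$ yields, conditionally on $\Delta_{n-1}(t)$,
\[
\Delta_n(t) \;\eqdist\; \sqrt{\Delta_{n-1}(t)} \cdot D_n,
\]
where $D_n$ is an independent copy of $D = \osc(B;[0,1])$.

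Next, I would iterate this recursion. Setting $W_0 = \mathrm{id}$ so that $\Delta_0(t)=t$, an easy induction gives
\[
\Delta_n(t) \;\eqdist\; t^{2^{-n}} \prod_{i=1}^{n} D_i^{2^{-(n-i)}} \;\eqdist\; t^{2^{-n}} \prod_{i=0}^{n-1} D_i^{2^{-i}},
\]
the second equality following by relabeling the i.i.d.\ $D_i$'s. For $t>0$, the factor $t^{2^{-n}}$ tends to $1$, so it only remains to show that the infinite product $\prod_{i=0}^{\infty} D_i^{2^{-i}}$ converges almost surely. This follows exactly as in the proof of Proposition \ref{onedim}: since $D > 0$ a.s.\ and both tails of $\log D$ are well controlled (the upper tail because $D \le 2 \sup_{[0,1]}|B|$ has Gaussian-type tails, the lower tail because $P(D < \delta)$ decays like $\exp(-c/\delta^2)$ by standard Brownian small-ball estimates), we have $E|\log D|<\infty$, hence $\sum_{i} 2^{-i} |\log D_i| <\infty$ a.s.\ by monotone convergence (or Borel--Cantelli as in the proof of Proposition \ref{onedim}). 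Consequently, the partial products converge a.s.\ to $\prod_{i=0}^\infty D_i^{2^{-i}}$, and in particular in distribution.

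The only delicate point is the derivation of the recursion: one must carefully argue that conditionally on $\Delta_{n-1}(t)$ (or more precisely, on the random endpoints $m_{n-1}, M_{n-1}$), the oscillation of the independent Brownian motion $B_n$ on that random interval really is distributed like $\sqrt{\Delta_{n-1}(t)}\cdot D$ with $D$ independent of $\Delta_{n-1}(t)$. This is a standard conditioning argument—fix the interval, apply stationarity and scaling, then integrate—but it is the one step that needs to be written out with care. Everything else is routine.
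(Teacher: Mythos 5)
Your proposal is correct and follows essentially the same route as the paper: both derive the distributional recursion $\Delta_n(t)\eqdist\sqrt{\Delta_{n-1}(t)}\,\osc(B_n;[0,1])$ by noting that the image of $[0,t]$ under $W_{n-1}$ is the interval $[I_{n-1},S_{n-1}]$ and then using stationarity of increments plus Brownian scaling of the independent $B_n$, iterate to get $\Delta_n(t)\eqdist t^{2^{-n}}\prod D_i^{2^{-i}}$ (up to the harmless relabeling of the i.i.d.\ factors), and conclude via almost sure convergence of the infinite product as in Proposition \ref{onedim}. Your explicit tail/moment argument for $E|\log D|<\infty$ is a slightly more detailed justification of that last step than the paper gives, but it is the same idea.
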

\begin{proof}
Let
\[
I_n(t) := \inf_{x\in [0,t]} W_n(x), \quad
S_n(t) := \sup_{x\in [0,x]} W_n(x).
\]
Then $\Delta_n(t) = S_n(t) - I_n(t)$, and
\begin{align*}
\Delta_{n+1}(t) 
&=  \sup_{0 \le x \le t} B_{n+1}(W_n(x)) -
\inf_{0 \le x \le t} B_{n+1}(W_n(x)) 
\\
&= \sup_{I_n(t) \le u \le S_n(t)} B_{n+1}(u) 
- \inf_{I_n(t) \le u \le S_n(t)} B_{n+1}(u)
\\
&= \osc(B_{n+1}; [I_n(t), S_n(t)])
\\
&\eqdist \osc(B_{n+1}; [0, \Delta_n(t)])
\\
&= \sqrt{\Delta_n(t)} ~\osc(B_{n+1}; [0,1]).
\end{align*}
Thus iterating this equation, we get in the spirit of the proof 
of Proposition \ref{onedim} the following equality in distribution 
\[
\Delta_n(t) \overset{(d)}{=}  
t^{2^{-n}} \prod_{i=1}^{n} D_i^{2^{-(i-1)}}.
\]
An argument similar to the one used in the proof of Proposition \ref{onedim} 
shows that the right-hand side of the last display actually converges almost 
surely as $n \to \infty$, to the infinite product 
$\prod_{i=0}^{\infty} D_i^{2^{-i}}$. Hence $\Delta_n(t)$ converges in 
distribution to the same random variable.
\end{proof}

\begin{remark}
Roughly speaking,
the oscillation of the infinitely iterated Brownian motion is the same,
in distribution, over any interval of any length.
\end{remark}

\proof[Proof of Proposition \ref{propsupport}] 
By Lemma \ref{oscillation},
$\Delta_n(1)$ converges in distribution, it is tight. 
Thus the support of $\mu_n$ is contained in a compact interval
whose endpoints are tight.
Applying Lemma \ref{support}, we deduce that,
almost surely, $\mu_\infty$ has bounded support,
as required.
\endproof


\subsection{Density of $\mu_\infty$}  
This section is devoted to the analysis of the properties of the 
density of $\mu_\infty$. We will proceed in two steps. 
First, using standard technique of Fourier analysis for occupation densities  
(see e.g.\,\cite{Ber69}), we will prove that $\mu_\infty$ almost surely 
has a density which is in $ \mathbb{L}^{2}$. 
At the same time, we obtain some estimates about this Fourier transform. 
We will then use a very general result of Pitt \cite{Pit78} 
on local times to prove that this density is in fact continuous and even 
H\"older continuous with exponent $1/2 - \varepsilon$ 
for every $ \varepsilon>0$.

\subsubsection{Harmonic analysis of $\mu_\infty$}
For $\xi \in \mathbb{R}$, let 
\[
\Phi(\xi) 
:= \int_ \mathbb{R} \mathrm{d}\mu_\infty(x) \exp( \mathrm{i}\xi x),  
\]
be the Fourier transform of the random probability measure $\mu_\infty$.

\begin{proposition} 
\label{Fourier} 
For any $\xi \in \mathbb{R}$ we have   
\begin{equation} 
\label{fourier}  
E[|\Phi(\xi)|^2]= \frac{4}{4+\xi^2}.  
\end{equation}
\end{proposition}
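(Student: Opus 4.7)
The plan is to reduce the computation of $E[|\Phi(\xi)|^2]$ to the characteristic function of the ``signed exponential'' distribution identified in Theorem \ref{thmfidi}, via the de Finetti/exchangeability representation of $\mu_\infty$ that was set up in the previous sections.

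First I would expand the modulus squared as a double integral:
\[
|\Phi(\xi)|^2 = \Phi(\xi)\,\overline{\Phi(\xi)} = \int_\mathbb{R}\!\int_\mathbb{R} e^{i\xi(x-y)} \, \mathrm{d}\mu_\infty(x)\, \mathrm{d}\mu_\infty(y).
\]
Recall that, conditionally on $\mu_\infty$, two i.i.d.\ samples $X_1, X_2$ drawn from $\mu_\infty$ have joint distribution $\mu_\infty \otimes \mu_\infty$, and unconditionally $(X_1, X_2) \sim \nu_2$ by the identification done after Proposition \ref{propfidi}. Therefore, applying Fubini (the integrand is bounded in modulus by $1$ and $\mu_\infty$ is a probability measure a.s.), I would write
\[
E[|\Phi(\xi)|^2] = E\!\left[E\!\left[e^{i\xi(X_1 - X_2)} \,\big|\, \mu_\infty\right]\right] = E\!\left[e^{i\xi(X_1-X_2)}\right],
\]
where the last expectation is taken with respect to $\nu_2$.

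Second, I would invoke Theorem \ref{thmfidi}, which tells us that the difference $X_1 - X_2$ of two coordinates of $\nu_2$ is distributed as $\pm \mathcal{E}(2)$, i.e.\ has density $e^{-2|x|}$ on $\mathbb{R}$. It then only remains to compute the characteristic function of this law, which is a direct computation:
\[
\int_\mathbb{R} e^{i\xi x}\, e^{-2|x|}\, \mathrm{d}x = \frac{1}{2 - i\xi} + \frac{1}{2 + i\xi} = \frac{4}{4 + \xi^2}.
\]

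There is essentially no serious obstacle here; the only real thing to be careful about is the justification for passing from the double integral against $\mu_\infty$ to an expectation against $\nu_2$, which requires correctly invoking the de Finetti representation that produced $\mu_\infty$ together with Fubini (legitimate because $|e^{i\xi(x-y)}| = 1$ and $\mu_\infty(\mathbb{R}) = 1$ almost surely by Theorem \ref{main}). All the ``hard'' probabilistic content has already been absorbed into Theorem \ref{thmfidi}.
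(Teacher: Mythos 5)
Your proof is correct, and the end computation is the same as the paper's: everything reduces to the characteristic function of the signed exponential $\pm\mathcal{E}(2)$, giving $4/(4+\xi^2)$. The route differs in where the limit is taken. You work directly at the level of the limiting law: you identify $E[|\Phi(\xi)|^2]$ with $E[e^{i\xi(X_1-X_2)}]$ for $(X_1,X_2)\sim\nu_2$, using the de Finetti/exchangeability identification of $\mu_\infty$ as the directing measure, and then quote the statement in Theorem \ref{thmfidi} (i.e.\ Proposition \ref{stationarity} combined with Proposition \ref{onedim}) that the difference $X_1-X_2$ has law $\pm\mathcal{E}(2)$. The paper instead stays at finite $n$: it writes $E[|\Phi(\xi)|^2]=\lim_n E[\exp(i\xi(W_n(U_1)-W_n(U_2)))]$ with $U_1,U_2$ uniform on $[0,1]$, applies the pre-limit identity $W_n(U_1)-W_n(U_2)\eqdist W_n(U_1-U_2)$ (the Gaussian manipulation from the proof of Proposition \ref{stationarity}), and then needs only the one-dimensional Proposition \ref{onedim} together with dominated convergence --- so it never has to invoke $\nu_2$ or the de Finetti identification at all. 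The only step in your version that deserves a precise citation is the claim that conditionally i.i.d.\ draws from $\mu_\infty$ have unconditional joint law $\nu_2$: the exchangeability paragraph only announces this identification, and what actually supports it is the Section 3.1 computation (i.i.d.\ draws from $\mu_n$ converge to $\nu_p$, plus Lemma \ref{convergence} and the convergence $\mu_n\to\mu_\infty$). This is the same "by the convergences already established" step the paper itself uses, so it is not a gap, but you should point to it explicitly rather than to the de Finetti discussion alone.
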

\proof 

By the definition of the Fourier transform $\Phi(\xi)$, we have
\[
E\left[ |\Phi(\xi)|^2 \right] 
= E \left[  \int_ {\mathbb{R}^2}\mathrm{d}\mu_\infty(x) 
\mathrm{d}\mu_\infty(y) e^{i \xi (x-y)} \right].  
\]
By the convergences already established we have 
\[
E \left[\int_ {\mathbb{R}^2}\mathrm{d}\mu_\infty(x) 
\mathrm{d}\mu_\infty(y) e^{i \xi (x-y)} \right] 
= \lim_{n \to \infty} E\Big[ \exp\big(i\xi(W_n(U_1)-W_n(U_2))\big)\Big], 
\]
where $U_1, U_2$ are two independent random variables uniformly 
distributed over $[0,1]$ and also independent of the sequence of
Brownian motions $B_1, B_2, \ldots$. By stationarity of the increments 
(see the proof of Proposition \ref{stationarity}), 
for any $s,t \in \mathbb{R}$, we have $W_n(t)-W_n(s) = W_n(t-s)$ in distribution,
and thus, 
\begin{eqnarray*} 
E \left[\int_ {\mathbb{R}^2}\mathrm{d}\mu_\infty(x) 
\mathrm{d}\mu_\infty(y) e^{i \xi (x-y)} \right] 
&=& \lim_{n \to \infty} E\Big[ \exp\big(i\xi(W_n(U_1-U_2))\big)\Big]\\ 
&=& \lim_{n \to \infty} \iint_0^1 \mathrm{d}u \mathrm{d}v \, 
E \left[\exp(i\xi W_n(u-v))\right].  
\end{eqnarray*}
Applying Proposition \ref{fidi1} and the dominated convergence theorem 
we get that 
\begin{eqnarray*}
E \left[\int_ {\mathbb{R}^2}\mathrm{d}\mu_\infty(x) 
\mathrm{d}\mu_\infty(y) e^{i \xi (x-y)} \right] 
\quad=\quad E[\exp( i \xi [\pm \mathcal{E}(2)])]  
\quad = \quad \frac{4}{4+\xi^2}. 
\end{eqnarray*}
\endproof

In particular, applying Fubini's theorem we deduce from \eqref{fourier} 
that 
\[
E[\|\Phi\|_2^2] = \int_{\mathbb R}
\left(\frac{4}{4+\xi^2}\right)^2 \mathrm{d}\xi
< \infty,
\]
and thus that $\Phi \in \mathbb{L}^2$ 
almost surely. By standard results on Fourier transforms,
this implies that, almost surely,  $\mu_\infty$ has density 
$(L_a)_{a\in \mathbb{R}}$ with respect to the Lebesgue measure
which is in $ \mathbb{L}^2$. Notice that the estimates 
\eqref{fourier} in fact give us a bit more than that, 
namely, for every $ 0<s <1/2 $ we have 
\begin{eqnarray*} 
\|\Phi\|_{H^s} 
:= \sqrt{\int_ \mathbb{R} \mathrm{d}\xi \ (1+\xi^2)^s |\Phi(\xi)|^2} 
&<& \infty,  \qquad \mbox{a.s.} 
\end{eqnarray*}
Applying the standard Sobolev inequality, we deduce that for every $0<s<1/2$ we have
\[
\|L\|_{ \mathbb{L}^{2/(1-2s)}}  \quad \leq  \quad C \|\Phi\|_{H^s},
\]
where $C$ is a universal constant;
see Theorem 1.38 in \cite{BCD11}. Since $s$ can be arbitrarily close to $1/2$ and since $L$ is itself a density (thus $L \in \mathbb{L}^1$ a.s.) we get that  $L \in \mathbb{L}^q$  
for any $1 \leq q < \infty$ almost surely.

\subsubsection{Continuity of the density} 
The idea to prove continuity of the density $(L_a)_{a \in \mathbb{R}}$ 
is to apply once again a Brownian motion ``on top of $\mu_{\infty}$'' 
and to use the general theory developed in \cite{Pit78}. 
Formally, if $\mu_\infty$ has density $(L_a)_{a\in \mathbb{R}}$ 
(which is in every $ \mathbb{L}^q, 1 \leq q < \infty$) 
we define the random measure $\widetilde{\mu}_\infty$ by 
 \begin{eqnarray*} 
\widetilde{\mu}_\infty f 
&:=&\int \mathrm{d}{\mu}_\infty(x) \ f(B(x)) 
= \int \mathrm{d}a\,L_a \ f(B(a)), 
\end{eqnarray*}
where $B$ is an independent two-sided Brownian motion. 
Clearly, we have 
\[
\widetilde{\mu}_\infty \eqdist \mu_\infty
\]
(equality in distribution). 
So it suffices to show that  $\widetilde{\mu}_{\infty}$ has a 
continuous density. This follows from the lemma:
\begin{lemma} 
Let $g : \mathbb{R} \to \mathbb{R}_+$ be the density of 
a probability measure supported on a compact interval 
$I \subset \mathbb{R}$ such that $g \in \mathbb{L}^q$ for every 
$1 \leq q < \infty$. Let also $B$ be a two-sided Brownian motion 
and consider the occupation measure of $B$ with respect 
to the measure $g(t)\mathrm{d}t$ defined as follows
\begin{eqnarray*} 
\mu f&=& \int_ \mathbb{R} g(t) \mathrm{d}t \,f( B(t)),  
\end{eqnarray*} 
for any Borel positive function $f$. 
Then the random measure $\mu$ almost surely has a density which is locally Hölder continuous of exponent $1/2-\varepsilon$ for any 
$ \varepsilon>0$.
\end{lemma}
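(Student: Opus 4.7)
The plan is to apply the Fourier-analytic machinery for local times developed by Berman and extended by Pitt \cite{Pit78} to the occupation measure $\mu$ of $B$ with respect to the speed measure $g(t)\,\mathrm{d}t$. Introduce the characteristic function
\[
\widehat{\mu}(\xi) := \int_{\mathbb{R}} g(t)\, e^{\mathrm{i}\xi B(t)}\,\mathrm{d}t, \qquad \xi \in \mathbb{R}.
\]
Since $B$ is a two-sided Brownian motion, $\operatorname{Var}(B(t)-B(s)) = |t-s|$ for all $s,t\in\mathbb{R}$, and hence
\[
E\bigl[|\widehat{\mu}(\xi)|^2\bigr] = \iint_{\mathbb{R}^2} g(t)g(s)\, e^{-\xi^2 |t-s|/2}\,\mathrm{d}t\,\mathrm{d}s \;\leq\; \frac{4\|g\|_2^2}{\xi^2},
\]
the bound following from the substitution $u = t-s$ together with the Cauchy--Schwarz estimate $\int g(v)g(v+u)\,\mathrm{d}v \leq \|g\|_2^2$. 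By Plancherel this already shows that $\mu$ almost surely admits a density $(\ell_a)_{a\in\mathbb{R}}$ in $\mathbb{L}^2$.

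To pass from $\mathbb{L}^2$ to H\"older regularity, I would work with the Fourier inversion
\[
\ell_a - \ell_b = \frac{1}{2\pi}\int_{\mathbb{R}} (e^{-\mathrm{i}\xi a} - e^{-\mathrm{i}\xi b})\, \widehat{\mu}(\xi)\,\mathrm{d}\xi
\]
and bound the even moments $E[|\ell_a - \ell_b|^{2k}]$ for large $k$. Expanding produces a $(2k)$-fold integral against $g(t_1)\cdots g(t_{2k})$ whose inner integrand is a Gaussian characteristic function $E[\exp(\mathrm{i}\sum_j \epsilon_j \xi_j B(t_j))]$ with signs $\epsilon_j \in \{\pm 1\}$. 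Ordering the $t_j$'s and using the independence of Brownian increments, this Gaussian factor splits as a product $\prod_i \exp(-\tfrac{1}{2}\eta_i^2 \Delta_i)$ indexed by the consecutive spacings $\Delta_i$, with $\eta_i$ a partial sum of the $\epsilon_j \xi_j$'s. This is precisely the decoupling exploited by Pitt for local times of Gaussian processes with strong local non-determinism, here applied with the speed measure $g(t)\,\mathrm{d}t$ in place of Lebesgue measure.

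The main technical step is to combine these factorized Gaussian estimates with the $g$-weights: because $g\in\mathbb{L}^q$ for every $q<\infty$, one can absorb the weights $g(t_1)\cdots g(t_{2k})$ via H\"older's inequality and derive an estimate of the form
\[
E\bigl[|\ell_a - \ell_b|^{2k}\bigr] \;\leq\; C_k\, |a-b|^{k(1-\varepsilon)}
\]
for every $\varepsilon > 0$, with $C_k$ possibly depending on $\varepsilon$. Kolmogorov's continuity criterion applied to the random field $a\mapsto \ell_a$ then yields a modification which is locally H\"older continuous of exponent $(1-\varepsilon)/2 - 1/(2k)$; letting $k\to\infty$ and $\varepsilon\to 0$ produces the claimed $1/2 - \varepsilon'$ regularity. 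The hard part is the combinatorial accounting over the $(2k)!$ orderings of the time variables and the associated partial-sum estimates on the $\eta_i$'s, which is exactly where the assumption $g \in \mathbb{L}^q$ for every $q<\infty$ plays its essential role in closing the integrals uniformly.
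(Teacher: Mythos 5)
Your overall strategy---Berman's Fourier/moment method as extended by Pitt to weighted occupation measures---is the right one, and your $\mathbb{L}^2$ step is correct: the bound $E[|\widehat{\mu}(\xi)|^2]\le 4\|g\|_2^2/\xi^2$ together with $|\widehat{\mu}|\le 1$ does give an almost surely square-integrable Fourier transform and hence an $\mathbb{L}^2$ density. But the centerpiece of the lemma is the H\"older continuity, and there your argument stops exactly where the work begins: the moment bound $E\bigl[|\ell_a-\ell_b|^{2k}\bigr]\le C_k|a-b|^{k(1-\varepsilon)}$ is asserted rather than derived, and you yourself defer the combinatorial accounting and the absorption of the weights $g(t_1)\cdots g(t_{2k})$ as ``the hard part.'' Crucially, you never isolate the concrete analytic condition on $g$ that makes that absorption possible, namely $\sup_{s\in I}\int_I g(t)\,|t-s|^{-(1+\delta)/2}\,\mathrm{d}t<\infty$ for every $0<\delta<1$. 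This is exactly the hypothesis of Pitt's Theorem 4 specialized to $\operatorname{Var}(B(t)-B(s))=|t-s|$, it is the only place where the assumption $g\in\mathbb{L}^q$ for all $q<\infty$ enters (via H\"older with conjugate exponent $q^*$ chosen so that $(1+\delta)q^*/2<1$, using compactness of $I$), and it is precisely what lets one integrate out the ordered time variables spacing by spacing in your factorized Gaussian integrals. Without stating and proving this estimate, the phrase ``close the integrals uniformly'' is a placeholder for the entire proof, so as written there is a genuine gap. (A smaller unaddressed point: the pointwise inversion formula you write for $\ell_a-\ell_b$ needs justification, e.g.\ by smoothing, since a priori you only know $\widehat{\mu}\in\mathbb{L}^2$.)

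For comparison, the paper does not redo any of the moment machinery: its whole proof consists of verifying the displayed estimate in one application of H\"older's inequality (choosing $q$ with $(1+\delta)q^*<2$) and then invoking Pitt's Theorem 4 as a black box, which directly yields a continuous density satisfying a local H\"older condition of every order $\delta'<\delta/2$; letting $\delta\uparrow 1$ gives the exponent $1/2-\varepsilon$. If you want to keep your self-contained route, the way to complete it is to prove that uniform integral bound first and then carry out the $2k$-th moment computation and Kolmogorov criterion with it---but be aware that this amounts to reproving Pitt's theorem in the special case at hand, whereas checking its hypothesis is a one-line argument.
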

\proof 
Let $ 0<\delta <1$ and choose $q \geq 1$ such that $(1+\delta) q^* <2$ 
where $q^*$ is defined by $1/q+ 1/q^* =1$. 
Applying H\"older's inequality we get that  
\begin{eqnarray*} 
\sup_{s \in I} \int _I \mathrm{d}t \,\frac{g(t) }{\sqrt{|t-s|}^{1+\delta}} 
& \leq & \left( \int_I \mathrm{d}t \, |g(t)|^q\right)^{1/q} 
\sup_{s \in I} \left(	\int_I \mathrm{d}t\, 
|t-s|^{-(1+\delta)q^*/2}\right)^{1/q^*} \quad <\quad  \infty.  
\end{eqnarray*}
We can thus apply Theorem 4 of \cite{Pit78} and get that $\mu$ a 
continuous density $L$ over $ \mathbb{R}$ which satisfies a 
local Hölder condition $|L(x)-L(y)|  \leq  K |x-y|^{\delta'}$,   
for any $\delta' < \delta/2$.
\endproof

\section{Comments} \label{comments}
\paragraph{Occupation over other measures.} In this work we considered 
the occupations measures $\mu_{n}$ of $W_{n}$ 
over the time interval $[0,1]$ but the proofs apply in the more 
general setting of occupation measures of $W_{n}$ over any 
probability measure on $ \mathbb{R}$ which is not atomic. More precisely if $\lambda$ is a probability measure on $ \mathbb{R}$ which has no atom, we define the occupation of $W_n$ over $\lambda$ by 
 \begin{eqnarray*} \mu_n^{(\lambda)}f &:=& \int_ \mathbb{R} \lambda( \mathrm{d}t) f(W_n(t)),  \end{eqnarray*} for any Borel $f : \mathbb{R} \to \mathbb{R}_+$. Then the random probability measures $\mu_n^{(\lambda)}$ converge towards $\mu_\infty$ in distribution as well.
 
\paragraph{Explicit finite dimensional marginals.} 
The consistent family distributions $\{\nu_{p}, p \geq 1\}$ 
introduced in Proposition 
\ref{propfidi} are the limiting distributions of the finite-dimensional 
marginals of the $W_{n}$'s. They are characterized by \eqref{stable}. 
Although they arise naturally in the study of iteration of Brownian motions, 
to the best of our knowledge, they have not been investigated so far for 
$p \geq 2$ and in particular no explicit formulae are known 
for the density of $\nu_{p}$, $p \geq 2$.

\begin{figure}[!h]
\begin{center}
\includegraphics[width=9cm]{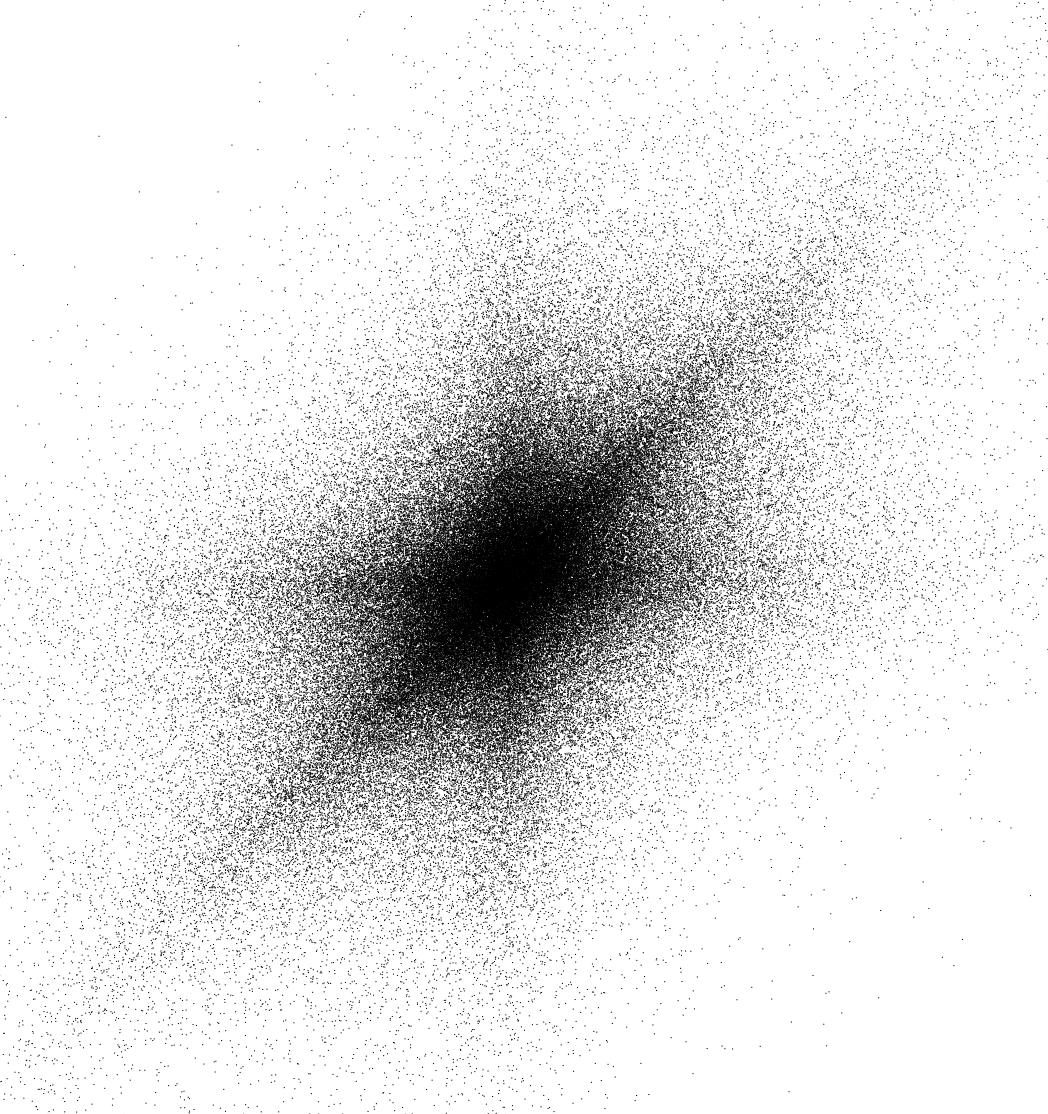}
\begin{minipage}{10cm}
\caption{ \label{seastar}
About $300 000$ independent samples of 
the distribution $\nu_{2}$. Simulation realized by Alexander Holroyd. 
}
\end{minipage}
\end{center}
\end{figure} 
In Fig.\,\ref{seastar} many independent points have been sampled according 
to $\nu_{2}$. A clear shape ``sea star'' emerges and we conjecture that the 
level-lines of the density of $\nu_{2}$  would be dilatation of this unique 
shape. However we do not have any candidate for this density.
 
\paragraph{Funaki's PDE.}
Following Funaki \cite{Fun79}, we can show that
$W_n$ solves the PDE $\partial u/\partial t = \partial^k u/\partial x^k$,
where $k=2^n$, with initial data $u(t=0,x)=u_0(x)$, $x \in \mathbb R$, 
where $u_0$
is a function satisfying growth conditions \cite{Fun79}.
The solution is given by
$u_n(t,x) = E[u_0(x+ W_n(2^{2^n-1} t))]$.
Although it is probably not possible to give a PDE meaning to
the exchangeable collection of random variables
$(W_\infty(x))_{x \in \mathbb R}$, we remark that the
limit to the signed exponential translates into
$\lim_{n \to \infty} u_n(t/2^{2^n-1}, x) =
\int_{\mathbb R} u_0(x-y) e^{-2|y|} dy$.

\paragraph{Ray-Knight theorem.} 
In the spirit of the famous  Ray-Knight theorems, do we have another way
to describe the density of $\mu_{\infty}$ as a diffusion process? 

\paragraph{Reflected Brownian motion.} 
It is possible to extend part of the previous work to iterations of 
reflected Brownian motions. Namely, it is likely that Proposition 
\ref{propfidi} goes through but Proposition \ref{stationarity} 
fails, and thus the analysis of the Fourier transform of the 
limiting measure would be more challenging.

\paragraph{Fractional Brownian motion.} 
Fractional Brownian motion generalizes Brownian motion in that it
is a Gaussian $H$-self-similar process with stationary increments,
where $0 < H < 1$.  Again, it is likely that Proposition \ref{propfidi} go through, but the limit
is even harder to describe. 
Of course, this raises the following interesting question:
what kind of processes can be iterated {\em ad libitum} and
result into some kind of limit?

\bibliographystyle{alpha}

\bigskip

\noindent
\begin{tabular}{lcl}
Nicolas Curien & \hspace{1cm} & Takis Konstantopoulos\\
D\'epartement de Math\'ematiques et Applications  &&  Department of Mathematics  \\
\'Ecole Normale Sup\'erieure, 45 rue d'Ulm & & Uppsala University P.O. Box 480\\
75230 Paris cedex 05, France && 751 06 Uppsala, Sweden \\ \ \\

nicolas.curien@ens.fr && takis@math.uu.se

\end{tabular}
 \end{document}